\newcommand{\D}{\partial D}
\renewcommand{\S}{\mathcal{S}}
\newcommand{\outside}{\mathbb{R}^3\setminus \overline{D}}
\renewcommand*{\Re}{\operatorname{Re}}
\newcommand{\de}{\: \mathrm{d}}
\newcommand{\R}{\mathbb{R}}
\newcommand{\iu}{\mathrm{i}\mkern1mu}
\newcommand{\N}{\mathbb{N}}
\renewcommand{\P}{\mathcal{P}}
\renewcommand{\S}{\mathcal{S}}
\newcommand{\T}{\mathcal{T}}
\newcommand{\p}{\partial}
\renewcommand{\epsilon}{\varepsilon}
\newcommand{\dx}{\: \mathrm{d}}
\renewcommand{\b}[1]{\textbf{#1}}
\newcommand{\ie}{\textit{i.e.}}
\newcommand{\nm}{\noalign{\smallskip}}
\newcommand{\ds}{\displaystyle}
\newcommand{\svdots}{\raisebox{3pt}{\scalebox{.6}{$\vdots$}}}
\newcommand{\sddots}{\raisebox{3pt}{\scalebox{.6}{$\ddots$}}}
\newcommand{\sadots}{\raisebox{3pt}{\scalebox{.6}{$\adots$}}}
\newtheorem{thm}{Theorem}
\newtheorem{lemma}[thm]{Lemma}
\newtheorem{remark}[thm]{Remark}
\numberwithin{equation}{section}
\numberwithin{thm}{section}
\title{High-order exceptional points and enhanced sensing in subwavelength resonator arrays}
\author{Habib Ammari\thanks{\footnotesize Department of Mathematics, ETH Z\"urich, R\"amistrasse 101, CH-8092 Z\"urich, Switzerland (habib.ammari@math.ethz.ch, bryn.davies@sam.math.ethz.ch, erik.orvehed.hiltunen@sam.math.ethz.ch).}\and Bryn Davies\footnotemark[1]  \and Erik Orvehed Hiltunen\footnotemark[1] \and  Hyundae Lee\thanks{\footnotesize  Department of Mathematics, Inha University,  253 Yonghyun-dong Nam-gu,  Incheon 402-751,  Korea (hdlee@inha.ac.kr).} \and Sanghyeon Yu\thanks{\footnotesize Department of Mathematics, Korea University, Seoul 02841, S. Korea (sanghyeon\_yu@korea.ac.kr).}}
\date{}
\begin{document}
	\maketitle
	
\begin{abstract}
	Systems exhibiting degeneracies known as exceptional points have remarkable properties with powerful applications, particularly in sensor design. These degeneracies are formed when eigenstates coincide, and the remarkable effects are exaggerated by increasing the order of the exceptional point (that is, the number of coinciding eigenstates). In this work, we use asymptotic techniques to study $\mathcal{PT}$-symmetric arrays of many subwavelength resonators and search for high-order asymptotic exceptional points. This analysis reveals the range of different configurations that can give rise to high-order asymptotic exceptional points and provides efficient techniques to compute them. We also show how systems exhibiting high-order exceptional points can be used for sensitivity enhancement.
\end{abstract}
	\vspace{0.5cm}
	\noindent{\textbf{Mathematics Subject Classification (MSC2000):} 35J05, 35C20, 35P20.
		
		\vspace{0.2cm}
		
		\noindent{\textbf{Keywords:}} $\mathcal{PT}$ symmetry, high-order exceptional points, subwavelength resonance, enhanced sensing, eigenvalue shift
		\vspace{0.5cm}
	
	\section{Introduction}
	
	The behaviour of subwavelength resonators, which are particles interacting strongly with waves at subwavelength scales, is strongly influenced by even very small perturbations. Here, \emph{subwavelength scales} refers to length scales which are significantly smaller than the incident wavelength. Due to this, subwavelength resonators are ideal candidates for building-blocks when designing sensors capable of detecting a variety of phenomena, such as mechanical vibrations, fluctuations in magnetic fields, changes in temperature and the presence of small particles such as viruses and nanoparticles. These devices rely on measuring the shifts in the structure's resonant frequencies, caused by the perturbations \cite{rechtsman2017applied, vollmer2008whispering, vollmer2008single}. A weakness of this approach, however, is that the shift in the resonant frequencies typically scales proportionally to the perturbation, meaning that the shift is very small for small perturbations. This weakness can be overcome through the use of structures with \emph{exceptional points}.
	
	An exceptional point is a point in parameter space at which two or more eigenvalues, and also the corresponding eigenvectors, coincide  \cite{miri2019exceptional, heiss2012physics}. A deep degeneracy of this nature gives rise to structures with remarkable properties. In particular, if a perturbation of order $s$ is made to a structure with an $N$\textsuperscript{th} order exceptional point (\emph{i.e.} one at which $N$ eigenvalues and eigenvectors coincide) then the eigenvalues will generally experience perturbations of order $s^{1/N}$. Thus, when trying to detect small perturbations, the measurable response will be relatively large, offering the grounds for designing enhanced sensing arrays \cite{chen2017exceptional, hodaei2017enhanced, wiersig2016sensors}.
	
	There are different approaches to achieve a non-Hermitian system needed to create exceptional points. One approach is to create a system with unidirectional coupling of the states \cite{wang2019arbitrary}. In systems of high-contrast subwavelength resonators, however, such unidirectional coupling is not achievable. Instead, a non-Hermitian system can be created by introducing gain and loss, corresponding to imaginary material parameters. The assumption of \emph{parity--time ($\P\T$) symmetry} forces the spectrum to be conjugate-symmetric. Exceptional points are then the transition points between a real spectrum and a non-real spectrum which is symmetric around the real axis. High-order exceptional points based on $\P\T$-symmetry have been observed, for example, in \cite{ding2016emergence, ding2015coalescence, yu2020higher, zhang2020high}.
	
	In the setting of subwavelength resonators, the existence and consequences of second-order asymptotic exceptional points was studied in \cite{ammari2020exceptional, ammari2020edge}. Similar structures, also with low-order exceptional points, have been considered in \cite{abdrabou2018exceptional, abdrabou2019exceptional, abdrabou2019formation}. By \emph{asymptotic} exceptional points we mean parameter values such that the eigenvalues and eigenvectors coincide \emph{at leading order} in the asymptotic parameters. In a system with radiative losses (which are inherently asymmetric) this is the best we can hope for. Another difficulty is that in this setting long-range interactions cannot be accurately neglected. In this work, we instead use a fully-coupled approach to study the exceptional points, based on a rigorous discrete approximation to the scattering problem. We both demonstrate analogues of the exceptional points reported in \emph{e.g.} \cite{zhang2020high} and, further, reveal a rich variety of configurations and symmetries which produce high-order exceptional points. Since the systems have open boundaries, radiative losses prevent the systems from exhibiting exact $\P\T$ symmetries. Nevertheless, we will study the asymptotic expansion in the subwavelength regime, where the limiting problem is indeed $\P\T$-symmetric, and demonstrate asymptotic exceptional points.
	
	The structure and main contributions of this work are as follows: we begin, in \Cref{sec:sensing}, by demonstrating the value of high-order exceptional points for sensing applications. We show that if a small particle is introduced into a structure with an  $N$\textsuperscript{th} order exceptional point, then one of the eigenvalues will experience a perturbation that is of the same order as the $N$\textsuperscript{th} root of the small particle's volume. The remaining sections are devoted to the study of high-contrast  subwavelength resonators. The \emph{capacitance matrix approximation} is presented in \Cref{sec:prelim}, which provides a rigorous, $\P\T$-symmetric approximation to the differential problem. The existence of third-order asymptotic exceptional points is proved in \Cref{sec:third}. When the order $N$ is higher than three, the study of exceptional points reduces to the study of a system of $N$ polynomial equations of order $N$. Analytical solutions of these systems are beyond reach. Instead, we combine asymptotic methods with numerical computations to demonstrate the exceptional points. For $N=4$, we find four solutions, with striking symmetries of the gain/loss distribution. These solutions continue to higher orders, and, moreover, the number of distinct solutions rapidly increase as $N$ increases. Finally, in \Cref{sec:sensing_position}, we return to the original motivation for high-order exceptional points, and numerically demonstrate the enhanced sensing in a system of subwavelength resonators.

	\section{Implications for enhanced sensing} \label{sec:sensing}
	
	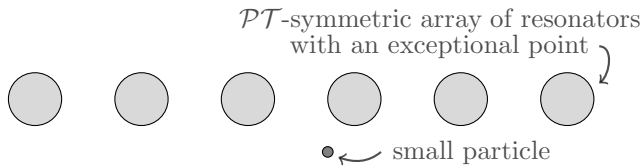
\begin{figure}[h]
		\centering
		\begin{tikzpicture}[scale=0.7]
		\foreach \x in {0,2,...,10} {\draw[fill=gray!30!white] (\x,0) circle (0.5);}
		\draw[fill=gray] (5.5,-1) circle (0.1);
		\def\opac{0.7};
		\draw[->,thick,opacity=\opac] (10.6,1) to [out=0,in=40] (10.6,0.3);
		\node[opacity=\opac] at (7.6,1.5) {$\mathcal{PT}$-symmetric array of resonators};
		\node[opacity=\opac] at (7.6,1) {with an exceptional point};
		\draw[->,thick,opacity=\opac] (6.5,-1) to [out=-140,in=-20] (5.7,-1.1);
		\node[opacity=\opac] at (8.2,-1) {small particle};
		\end{tikzpicture}
		\caption{A system with an exceptional point will experience enhanced eigenfrequency splitting in response to small perturbations, for example due to the introduction of a small particle.}
	\end{figure}

	In order to motivate our forthcoming search for high-order exceptional points in systems of subwavelength resonators, we first explore the use of such a system in enhanced sensing applications. In particular, we wish to understand the behaviour of a system with an $N$\textsuperscript{th} order exceptional point in response to the introduction of a small particle. Recall that the outgoing Helmholtz Green's function $G^k$ is given by
	$$
	G^k(x,y) := -\frac{e^{\iu k|x-y|}}{4\pi|x-y|}, \quad x,y \in \R^3, x\neq y, k\geq 0,
	$$
	where ``outgoing'' is taken to mean that it satisfies the Sommerfeld radiation condition. For a function $u$ with wave number $k$, this condition is given by
	\begin{equation}\label{eq:sommerfeld}
	\lim_{|x|\rightarrow \infty}|x| \left(\frac{\p}{\p |x|} -\iu k\right)u(x) = 0.
	\end{equation}
	Consider a general setting where time-harmonic waves propagate through a material with material parameters described by a function $m\in L^\infty(\mathbb{R}^3)$, which we assume is constant outside of a compact set. In view of the Jordan-type decompositions established in \cite{ammari2015super}, we suppose that the system has an $N$\textsuperscript{th} order exceptional point in the sense that the corresponding Green's function, which is defined as the solution to the Helmoltz problem
	\begin{equation}
	\left\{
	\begin{array} {ll}
	\left(\Delta_x + m(x)k^2\right) G_m^k(x,y)  = \delta_y(x) \qquad \text{in } \R^3, \\[0.3em]
	G_m^k(x,y) \qquad \text{satisfies the Sommerfeld radiation condition as }  |x| \rightarrow \infty,
	\end{array}
	\right.
	\end{equation}
	has the form
	\begin{equation} \label{eq:app_Gexcep}
	G_m^k(x,y) = G^k(x,y)+ \sum_{j=1}^N \frac{\varphi_j(x)\varphi_j(y)}{(k^2-(k^*)^2)^j}
	+ R(k,x,y),
	\end{equation}
	in a neighbourhood of $k^*$, where $k^*\in\mathbb{C}$ is the single $N$\textsuperscript{th} order subwavelength resonant frequency of the system, $\{\varphi_1,\dots,\varphi_N\}\subset H_{\mathrm{loc}}^1(\mathbb{R}^3)$ are generalized eigenmodes associated to $k^*$ and the remainder $R$ is a holomorphic function of $k$ that is smooth as a function of $x$ and $y$. As usual, for a set $A\subset \R^3$, $H^1(A)$ denotes the Sobolev space consisting of square-integrable functions whose weak derivatives are square-integrable, while $H_{\mathrm{loc}}^1(A)$ denotes the subset of $H^1(A)$ whose functions, and weak derivatives thereof, are locally square-integrable.


	Suppose that a small particle $\Omega$ is introduced to the system, which is small in the sense that $\Omega=z+s^{1/3}B$ for some fixed domain $B$, fixed centre $z\in\mathbb{R}^3$ and small size $0<s\ll1$. We have that the volume of $\Omega$ satisfies $|\Omega|=O(s)$ as $s\to0$. By the asymptotic Gohberg-Sigal theory developed in \cite{ammari2009layer}, the perturbed problem will have $N$ resonant modes with frequencies in a neighbourhood of $k^*$, as $s\to0$. A resonant mode of the new system will satisfy the problem
	\begin{equation} \label{eq:small_particle_scattering}
	\left\{
	\begin{array} {ll}
	(\Delta + m(x)k^2) u(x)  = 0 \qquad \text{in } \R^3 \setminus {\Omega}, \\[0.3em]
	(\Delta + \tau(x) k^2) u(x)  = 0 \qquad \ \text{in } \Omega, \\[0.3em]
	u(x) \qquad \text{satisfies the Sommerfeld radiation condition as }  |x| \rightarrow \infty,
	\end{array}
	\right.
	\end{equation}
	where $\tau\in L^\infty(\Omega)$ describes the material parameters within $\Omega$ and we assume that $\tau\not\equiv m$. 
	Thanks to the Lippmann-Schwinger representation \cite{ammari2018mathematical}, we have that
	\begin{equation*}
	u(y)=k^2\int_\Omega (m(x)-\tau(x))G_m^k(x,y)u(x)\de x,
	\end{equation*}
	which, using the decomposition \eqref{eq:app_Gexcep}, becomes
	\begin{equation*} 
	\begin{split}
	u(y)&= k^2\int_\Omega (m(x)-\tau(x))G^k(x,y)u(x)\de x \\&\quad+
	k^2\sum_{j=1}^{N}\frac{\varphi_j(y)}{(k^2-(k^*)^2)^j}\int_\Omega (m(x)-\tau(x))\varphi_j(x)u(x)\de x \\&\quad+
	k^2\int_\Omega (m(x)-\tau(x))R(k,x,y)u(x)\de x.
	\end{split}
	\end{equation*}
	Define the operator $T:L^2(\Omega)\to L^2(\Omega)$ as
	\begin{equation*} \label{defn:T}
	T[v](y)=v(y)-k^2\int_\Omega (m(x)-\tau(x))G^k(x,y)v(x)\de x.
	\end{equation*}
	For sufficiently small $s$, $T$ is invertible (\emph{cf.} \cite[Lemma~3.2]{ammari2015super}), so we may write that
	\begin{equation*}
	u(y)=k^2\sum_{j=1}^{N}\frac{T^{-1}[\varphi_j(y)]}{(k^2-(k^*)^2)^j}\int_\Omega (m(x)-\tau(x))\varphi_j(x)u(x)\de x + \tilde{R}^k[u](y),
	\end{equation*}
	where $\tilde{R}^k$ is holomorphic as a function of $k$ (in a neighbourhood of $k^*$) and its operator norm satisfies $\|\tilde{R}^k\|_{\mathcal{L}(L^2(\Omega),L^2(\Omega))}\to0$ as $s\to0$ for each fixed $k$ \cite{ammari2009layer}. Multiplying by $(m-\tau)\varphi_l$ for $l=1,\dots,N$ and integrating over $\Omega$ yields an approximate matrix eigenvalue problem
	\begin{equation}
	v = k^2Av + r,
	\end{equation}
	where $v\in\mathbb{C}^N$, $A\in\mathbb{C}^{N\times N}$ and $r\in\mathbb{C}^N$ are given by
	\begin{align*}
	v_j &= \int_\Omega (m(x)-\tau(x))\varphi_j(x)u(x)\de x, \\
	A_{ij}&= \frac{1}{(k^2-(k^*)^2)^j} \int_\Omega (m(x)-\tau(x))\varphi_i(x)T^{-1}[\varphi_j](x)\de x, \\
	r_j &= \int_\Omega (m(x)-\tau(x))\varphi_j(x) \tilde{R}[u](k,x) \de x.
	\end{align*}
	Since $r=O(s)$ as $s\to0$, the resonant modes of the perturbed system will approximately satisfy, up to leading order in $s$, the problem
	\begin{equation} \label{eq:sense_problem}
	\det\left(I-k^2A\right)=0.
	\end{equation}
	
	Gohberg-Sigal theory tells us that as $s\to0$, there will be a small perturbation of the original eigenvalue $k^*$, so we write $k=k^*+\nu$ for some small $\nu$. Then, we may write that $I-k^2A=\nu^{-N}(B+C)$ where $C$ is a matrix which has norm $\|C\|_\infty=O(\nu)$ and (vanishing values suppressed)
	\begin{equation*}
	B=\begin{pmatrix}
	\nu^N-\nu^{N-1}\lambda_1 & & & \eta_1 \\
	& \ddots & & \vdots \\
	& & \nu^N-\nu \lambda_{N-1} & \eta_{N-1}\\
	& & & \nu^N-\lambda_N
	\end{pmatrix},
	\end{equation*}
	with the constants $\lambda_1,\dots,\lambda_N,\eta_1,\dots,\eta_{N-1}$ given by $\lambda_i= \int_\Omega (m-\tau)\varphi_iT^{-1}[\varphi_i]\de x$ and $\eta_i= \int_\Omega (m-\tau)\varphi_iT^{-1}[\varphi_N]\de x$. Since $\|C\|_\infty$ is small the Generalized Rouch\'e Theorem, as given in \cite{ammari2009layer}, can be used to determine the singularities of $B+C$. In particular, there will be a one-to-one correspondence (up to multiplicity) between the singularities of $B+C$ and of $B$, and these singularities will be asymptotically close as $s\to0$. Expanding $\det(B)$, we see that it has singularities satisfying $\nu=(\lambda_j)^{1/j}$ for $j=1,\dots,N$. In particular, $j=N$ corresponds to a resonant mode with frequency $k$ which satisfies, in an asymptotic sense,
	\begin{equation*}
	k-k^*\approx\left( \int_\Omega (m-\tau)\varphi_NT^{-1}[\varphi_N]\de x\right)^{1/N}.
	\end{equation*}
	Approximating this integral as $s\to0$, we see that
	\begin{equation} \label{eq:perturbation}
	k-k^*\approx\left( \eta_z|\Omega| \right)^{1/N},
	\end{equation}
	where $|\Omega|$ is the volume of $\Omega$ and $\eta_z=(m(z)-\tau(z))\varphi_N(z)T^{-1}[\varphi_N](z)$.
	
	\begin{remark}
		It is clear that in order for \eqref{eq:perturbation} to offer a useful approach to enhance the shift in the resonant frequency the constant $\eta_z$ should be maximised by carefully positioning the small particle.  We will return to this point in \Cref{sec:sensing_position}, where we will examine how $\eta_z$ varies as a function of the particle's position, for several of the high-order exceptional points which we find below.
	\end{remark}

	\begin{remark}
		In this expository section, we considered a relatively simple setting \eqref{eq:small_particle_scattering} in order to streamline the analysis. This argument could be easily generalized to other settings, such as those considered below.
	\end{remark}
	
	\section{Subwavelength resonators} \label{sec:prelim}
	In this section, we set out the subwavelength resonance problem that will be studied in the remainder of this work. We also introduce the capacitance matrix formulation and the dilute approximation that will form the basis of our search for high-order asymptotic exceptional points.
	\subsection{Problem description}	
	
	We will study a structure composed of $N$ resonators $D_1, D_2,..., D_N $ which are pairwise disjoint subsets of $\R^3$ such that $\p D_i \in C^{1,s}$ for $0 < s < 1$.	In our search for high-order exceptional points, we will restrict ourselves to the case where the resonators are all of equal volume. We assume that the material inside the $i$\textsuperscript{th} resonator $D_i$ has complex-valued material parameters $\kappa_i \in \mathbb{C}$ and $\rho_i \in \mathbb{C}$. The corresponding parameters $\kappa, \rho$ of the surrounding material are assumed to be real. We denote the frequency of the waves by $\omega$ and define the parameters, for $i=1,...,N$,
	$$v_i = \sqrt{\frac{\kappa_i}{\rho_i}}, \quad v = \sqrt{\frac{\kappa}{\rho}}, \quad \delta_i = \frac{\rho_i}{\rho}, \quad k = \frac{\omega}{v}, \quad k_i = \frac{\omega}{v_i}.$$	
	In the frequency domain, the time-reversal operator $\T$ is given by complex conjugation, while the parity operator $\P$ is given by $\mathcal{P}: \R^3 \rightarrow \R^3 $,
	$$\mathcal{P}(x) = -x.$$
	We assume that the collection of resonators $D= \cup_{i=1}^N D_i$ is $\P\T$-symmetric, which means that
	$$\mathcal{P} D = D$$
	and that, for indices $i$ and $j$ such that $\P D_i = D_j$, it must hold that
	$\kappa_i = \overline{\kappa_j}$ and  $\rho_i = \overline{\rho_j}$.
	The imaginary parts can be interpreted as the magnitude of the gain or loss. With these assumptions we define the material contrast $\delta:=|\delta_1|$, which will be our asymptotic parameter. We will assume that
	$$ \delta \ll 1,$$
	and that $\delta_i=O(\delta)$ for all $i>1$ while $v_i = O(1)$ for all $i=1,...,N$. We study the wave resonance problem
	\begin{equation} \label{eq:scattering}
	\left\{
	\begin{array} {ll}
	\ds \Delta {u}+ k^2 {u}  = 0 & \text{in } \R^3 \setminus D, \\[0.3em]
	\ds \Delta {u}+ k_i^2 {u}  = 0 & \text{in } D_i, \ i=1,...N, \\
	\nm
	\ds  {u}|_{+} -{u}|_{-}  = 0  & \text{on } \partial D, \\
	\nm
	\ds  \delta_i \frac{\partial {u}}{\partial \nu} \bigg|_{+} - \frac{\partial {u}}{\partial \nu} \bigg|_{-} = 0 & \text{on } \partial D_i, \ i=1,...N, \\
	\nm
	\ds u(x) \quad \ \text{satisfies the}&\hspace{-7pt}\text{Sommerfeld radiation condition as }  |x| \rightarrow \infty.
	\end{array}
	\right.
	\end{equation}
	Here, $|_+$ and $|_-$ denote the limits from the outside and inside of $D$, respectively. Recall that the Sommerfeld radiation condition is specified in  \eqref{eq:sommerfeld}. We will study the solutions of \eqref{eq:scattering} by rigorously decomposing them in terms of their subwavelength resonant modes. We say that a frequency $\omega$ is a \emph{resonant frequency} if the real part of $\omega$ is positive and there is a non-zero solution $u$ (which is known as the \emph{resonant mode} associated with $\omega$) to the problem \eqref{eq:scattering}. Moreover, we say that a resonant frequency $\omega$ is a \emph{subwavelength resonant frequency} if $\omega\to 0$ as $\delta \to 0$.

	\subsection{Capacitance-matrix analysis} \label{sec:cap}
	Our approach to solving \eqref{eq:scattering} in the case that $u^{in}=0$ is to study the \emph{(weighted) capacitance matrix}. We will see that the eigenstates of this $N\times N$-matrix characterize, at leading order in $\delta$, the resonant modes of the system. This approach offers a rigorous discrete approximation to the differential problem \eqref{eq:scattering}.
	
	Let $\S_D^k$ be the single layer potential, defined by
	\begin{equation*} \label{eq:Sdef}
	\S_D^k[\phi](x) := \int_{\partial D} G^k(x,y)\phi(y) \dx \sigma(y), \quad x \in \R^3.
	\end{equation*}
	We will use the notation $\S_D$ for $\S_D^0$, \emph{i.e.} for the Laplace single layer potential. Since we are working in three dimensions, the Laplace single layer potential is known to be invertible as a map from $L^2(\D)$ to $H^1(\D)$. Further properties of the single layer potential can be found in \emph{e.g.} \cite{ammari2018mathematical}.
	
	In order to introduce the notion of capacitance, we define the functions $\psi_j$, for $j=1,...,N$, as 
	\begin{equation*}
	\psi_j=\S_D^{-1}[\chi_{\p D_j}],
	\end{equation*}
	where $\chi_A:\mathbb{R}^3\to\{0,1\}$ is used to denote the characteristic function of a set $A\subset\mathbb{R}^3$.	The capacitance coefficients $C_{ij}$, for $i,j=1,...,N$, are then defined as
	\begin{equation*}
	C_{ij}=-\int_{\D_i} \psi_j\de\sigma.
	\end{equation*}
	The matrix $C = (C_{ij})$, for $i,j=1,...,N$, is called the \emph{capacitance matrix}. We will define $a\in\mathbb{R}$ to be such that $\Re (v_1^2\delta_1) = \delta a$ and assume that $ a \neq 0$. We then define the weight matrix $V = (V_{ij})$ to be the diagonal matrix with non-zero entries given by
	\begin{equation} \label{eq:V}
	V_{ii} = \frac{v_i^2\delta_i}{\delta a}, \quad i = 1,...,N.
	\end{equation}
	We will see that the factor $a$ just corresponds to a rescaling of all the subwavelength resonant frequencies, and the factor $\delta$ implies that the entries of $V$ scale as $O(1)$ for small $\delta$. Finally, we define the \emph{weighted capacitance matrix} $C^v$ as
	\begin{equation} \label{eq:cap_matrix}
	C^v:= \delta aVC = \left(\begin{smallmatrix}
	 v_1^2\delta_1  {C_{11}} & v_1^2\delta_1 {C_{12}}  & \cdots & v_1^2 \delta_1 {C_{1N}}  \\
	{v_2^2\delta_2}C_{21} & {v_2^2\delta_2}C_{22} & \cdots & v_2^2\delta_2 C_{2N} \\
	\svdots & \svdots & \sddots & \svdots\\
	{v_N^2\delta_N}C_{N1} & {v_N^2\delta_N}C_{N2} & \cdots & v_N^2\delta_3 C_{NN}
	\end{smallmatrix}\right).
	\end{equation}
	This has been weighted to account for the different material parameters inside the different resonators, see \emph{e.g.} \cite{ammari2020close, ammari2017double} for other variants in slightly different settings, such as when the resonators have different volumes.


	We define the functions $S_i^\omega$ as	
	$$S_i^\omega(x) = \begin{cases}
	\S_{D}^{k}[\psi_i](x), & x\in\outside,\\
	\S_{D}^{k_j}[\psi_i](x), & x\in D_j, \ j=1,...,N.\\
	\end{cases}
	$$
	To simplify the notation, we also define the vector of functions $\underline{S}^{\omega}$ as 
	$$\underline{S}^{\omega}(x) = \begin{pmatrix} S_1^\omega(x) \\
	\vdots \\
	S_N^\omega(x)
	\end{pmatrix}.$$
	Then, for a vector $\underline{q} = (q_1,...,q_N)^\mathrm{T} \in \mathbb{C}^N$, where $^\mathrm{T}$ denotes the transpose, we write $\underline{q} \cdot \underline{S}^{\omega}$ to denote the dot product
	$$\underline{q} \cdot \underline{S}^{\omega}(x) = \sum_{i=1}^Nq_i S_i^\omega(x).$$
	The following theorem is a straightforward generalization of \cite[Lemma 2.1, Theorem 2.2]{ammari2020exceptional}.
	\begin{thm} \label{thm:res}
		Let $(\lambda_i, \underline{q_i})$ be the eigenpairs of the weighted capacitance matrix $C^v$. As $\delta \rightarrow 0$, the subwavelength resonant frequencies $\omega_i$ satisfy the asymptotic formula
		$$\omega_i = \sqrt{\frac{\lambda_i}{|D_1|}} + O(\delta), \quad i = 1,\dots,N,$$
		where $|D_1|$ is the volume of each resonator and the branch of the square root is chosen with positive real part. Moreover, the corresponding resonant modes $u_i$ satisfy the asymptotic formula
		$$u_i(x) = \underline{q_i} \cdot \underline{S}^{\omega}(x) + O(\delta^{1/2}), \quad i = 1,\dots,N.$$
	\end{thm}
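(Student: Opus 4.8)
The plan is to reduce the resonance problem \eqref{eq:scattering} (with $u^{in}=0$) to a boundary integral equation via single layer potentials and then extract the leading-order behaviour in $\delta$. Since $\S_D^k[\psi]$ solves the Helmholtz equation with wavenumber $k$ off $\D$ and satisfies the radiation condition, and $\S_D^{k_j}[\phi]$ solves it with wavenumber $k_j$ inside $D_j$, I would seek the resonant mode in the form $u = \S_D^k[\psi]$ in $\outside$ and $u = \S_D^{k_j}[\phi]$ in $D_j$, for densities $\psi,\phi\in L^2(\D)$. Imposing the continuity condition $u|_+=u|_-$ and the weighted Neumann condition $\delta_i\,\p_\nu u|_+=\p_\nu u|_-$ on each $\p D_i$, via the jump relations $\p_\nu\S_D^k[\psi]|_\pm=(\pm\tfrac12 I+(\K_D^k)^*)[\psi]$, turns the problem into a homogeneous system $\A(\omega,\delta)[(\psi,\phi)]=0$ for a boundary integral operator $\A$ depending holomorphically on $\omega$. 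A subwavelength resonant frequency is precisely a characteristic value of $\A$ tending to $0$ as $\delta\to0$.

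The heart of the argument is the asymptotic analysis of $\A$ as $\omega,\delta\to0$, which is self-consistent under the scaling $\omega=O(\delta^{1/2})$ so that $k^2,k_j^2=O(\delta)$. Expanding $\S_D^k=\S_D+O(\omega)$ and $(\K_D^k)^*=\K_D^*+O(\omega^2)$ (the linear term vanishes in three dimensions), continuity forces $\psi=\phi+O(\omega)$. Because $\delta_i\to0$, the Neumann condition gives $\p_\nu u|_-=\delta_i\,\p_\nu u|_+\to0$, so the leading-order interior field is harmonic with vanishing Neumann data, hence constant on each $D_i$; writing $x_i$ for these values, inversion of $\S_D$ yields $\psi=\sum_j x_j\psi_j+O(\omega)$. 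I would then integrate the Neumann condition over each $\p D_i$. For the interior term, integrating $\Delta u+k_i^2 u=0$ over $D_i$ gives $\int_{\p D_i}\p_\nu u|_-\de\sigma=-k_i^2\int_{D_i}u\de x$, which at leading order equals $-\tfrac{\omega^2}{v_i^2}x_i|D_i|$ and is where the frequency enters. For the exterior term, $\S_D[\psi_j]=\chi_{\p D_j}$ forces $\p_\nu\S_D[\psi_j]|_-=0$, so $\int_{\p D_i}\p_\nu\S_D[\psi_j]|_+\de\sigma=\int_{\p D_i}\psi_j\de\sigma=-C_{ij}$ and the exterior term collapses to $-\delta_i(Cx)_i$. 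Equating the two sides and using $|D_i|=|D_1|$ gives $v_i^2\delta_i(Cx)_i=\omega^2|D_1|x_i$, i.e. the eigenvalue problem $C^v x=\omega^2|D_1|\,x$. This identifies the leading-order frequencies through $\omega^2|D_1|=\lambda_i$ and the leading-order data through $x=\underline{q_i}$.

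To make this rigorous and recover the error terms I would apply the generalized Rouch\'e (Gohberg--Sigal) theorem of \cite{ammari2009layer}: the characteristic values of $\A$ lie within $O(\delta)$ of those of its capacitance-matrix truncation, which are exactly the roots of $\det(C^v-\omega^2|D_1|I)$. Choosing the branch with positive real part yields exactly $N$ subwavelength frequencies $\omega_i=\sqrt{\lambda_i/|D_1|}+O(\delta)$. For the modes, substituting $\psi=\sum_j(q_i)_j\psi_j+O(\delta^{1/2})$ into $u=\S_D^k[\psi]$ outside and $u=\S_D^{k_j}[\psi]$ inside gives $u_i=\sum_j(q_i)_j S_j^\omega+O(\delta^{1/2})=\underline{q_i}\cdot\underline{S}^\omega+O(\delta^{1/2})$, since $S_j^\omega$ is by definition $\S_D^k[\psi_j]$ in $\outside$ and $\S_D^{k_l}[\psi_j]$ in $D_l$.

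I expect the main obstacle to be the uniform bookkeeping of asymptotic orders: one must verify that every term discarded in the reduction is genuinely of lower order in the operator norm, uniformly for $\omega$ in a fixed neighbourhood of each leading-order frequency, so that the Gohberg--Sigal comparison between $\A$ and its truncation is legitimate and the counting of characteristic values is exact. A secondary point is checking that the weights $v_i^2\delta_i$ emerge correctly from the interior ($\omega^2/v_i^2$) and boundary ($\delta_i C_{ij}$) terms and assemble into $C^v$, and that the assumed scaling $\omega=O(\delta^{1/2})$ is consistent throughout. As the statement is a direct generalization of \cite[Lemma 2.1, Theorem 2.2]{ammari2020exceptional}, these estimates should transfer with only cosmetic changes accounting for general $N$ and distinct parameters $v_i,\delta_i$.
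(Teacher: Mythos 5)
Your proposal is correct and coincides with the paper's own route: the paper proves \Cref{thm:res} simply by citing \cite[Lemma~2.1, Theorem~2.2]{ammari2020exceptional}, whose argument is exactly the one you reconstruct --- the single-layer representation with jump relations, the low-frequency expansion under the scaling $\omega = O(\delta^{1/2})$ leading to the eigenvalue problem $C^v \underline{x} = \omega^2 |D_1| \underline{x}$, and the Gohberg--Sigal theory of \cite{ammari2009layer} to make the characteristic-value counting and the $O(\delta)$, $O(\delta^{1/2})$ error terms rigorous. Your bookkeeping (vanishing linear term of $(\K_D^k)^*$ in 3D, $\int_{\p D_i}\p_\nu\S_D[\psi_j]|_+\de\sigma = -C_{ij}$, the assembly of the weights $v_i^2\delta_i$ into $C^v$) matches the cited proof, with only the cosmetic generalization to $N$ resonators and distinct parameters that the paper itself describes as straightforward.
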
	
	
	\subsection{Asymptotic exceptional points in the dilute regime}  \label{sec:dilute}
	The dilute regime corresponds to the limit when the resonator separation becomes large relative to their size. Specifically, we fix the size and shape of each resonator and assume that the separation scales in proportion to $\epsilon^{-1}$. We wish to study the behaviour of the system as $\epsilon\rightarrow 0$. The following lemma was proved in \cite{ammari2020topological} (up to modification by rescaling).
	\begin{lemma} \label{lem:dilute}
		Consider a dilute system of $N$ identical subwavelength resonators, given by
		$$
		D = \bigcup_{j=1}^N ( B + \epsilon^{-1}z_j),
		$$
		where $0<\epsilon\ll 1$, $B$ is some fixed domain and $\epsilon^{-1}z_j$ represents the position of each resonator. Here, $z_j$ and the size of $B$ are of order one. In the limit $\epsilon\rightarrow 0$,  the asymptotic behaviour of the capacitance coefficients are given by
		$$
		C_{ij} = \begin{cases}
		\mathrm{Cap}_B + O(\epsilon^2), &\quad i=j,
		\\
		\ds -\epsilon \frac{(\mathrm{Cap}_B)^2}{4\pi |z_i-z_j|} + O(\epsilon^2), &\quad i\neq j,
		\end{cases}
		$$
		where $\mathrm{Cap}_B:=-\int_{\partial B} \S_B^{-1}[\chi_{\partial B}]\de\sigma$.
	\end{lemma}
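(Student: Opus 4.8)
The plan is to exploit the block structure of the single-layer potential $\S_D$ induced by the well-separated geometry, together with a Neumann-series expansion of $\S_D^{-1}$ in the small parameter $\epsilon$. First I would write $\S_D = \S_D^{(0)} + \S_D^{(1)}$, where $\S_D^{(0)}$ is the block-diagonal operator whose $j$-th block is the local Laplace single-layer potential $\S_{D_j}$ on $\partial D_j = \partial B + \epsilon^{-1}z_j$, and $\S_D^{(1)}$ collects the off-diagonal interaction blocks with kernel $G^0(x,y)$ for $x\in\partial D_i$, $y\in\partial D_j$, $i\neq j$. By translation invariance each diagonal block is a rigid translate of $\S_B$, so $\S_D^{(0)}:L^2(\D)\to H^1(\D)$ is invertible with inverse bounded uniformly in $\epsilon$.

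Next I would estimate the interaction term. For $x\in\partial D_i$ and $y\in\partial D_j$ with $i\neq j$ the separation is $|x-y| = \epsilon^{-1}|z_i-z_j| + O(1)$, so expanding the kernel gives
\[
G^0(x,y) = -\frac{\epsilon}{4\pi|z_i-z_j|} + O(\epsilon^2),
\]
where the leading term is \emph{constant} in the surface variables. Consequently $\|\S_D^{(1)}\|_{\L(L^2(\D),H^1(\D))} = O(\epsilon)$, and at leading order each off-diagonal block of $\S_D^{(1)}$ is the rank-one operator $\phi\mapsto -\tfrac{\epsilon}{4\pi|z_i-z_j|}\int_{\partial D_j}\phi\,\de\sigma$. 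The Neumann series then yields
\[
\S_D^{-1} = (\S_D^{(0)})^{-1} - (\S_D^{(0)})^{-1}\S_D^{(1)}(\S_D^{(0)})^{-1} + O(\epsilon^2).
\]

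From here I would compute $\psi_j = \S_D^{-1}[\chi_{\partial D_j}]$ order by order and restrict to each $\partial D_i$. At zeroth order $\psi_j$ is the local density $\psi_j^{\mathrm{loc}}:=\S_{D_j}^{-1}[\chi_{\partial D_j}]$, supported on $\partial D_j$, with $\int_{\partial D_j}\psi_j^{\mathrm{loc}}\,\de\sigma = -\mathrm{Cap}_B$. The crucial observation is that $\S_D^{(1)}\psi_j^{\mathrm{loc}}$ is supported on $\bigcup_{i\neq j}\partial D_i$, and since $(\S_D^{(0)})^{-1}$ is block-diagonal the first-order correction to $\psi_j$ \emph{vanishes on $\partial D_j$ itself}. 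This is exactly why the diagonal entries receive no $O(\epsilon)$ correction, giving $C_{jj} = -\int_{\partial D_j}\psi_j^{\mathrm{loc}}\,\de\sigma + O(\epsilon^2) = \mathrm{Cap}_B + O(\epsilon^2)$. For $i\neq j$ the rank-one leading interaction places the constant $\tfrac{\epsilon\,\mathrm{Cap}_B}{4\pi|z_i-z_j|}$ on $\partial D_i$, which the local inverse turns into $\tfrac{\epsilon\,\mathrm{Cap}_B}{4\pi|z_i-z_j|}\psi_i^{\mathrm{loc}}$; integrating and using $\int_{\partial D_i}\psi_i^{\mathrm{loc}}\,\de\sigma = -\mathrm{Cap}_B$ produces the claimed $-\tfrac{\epsilon(\mathrm{Cap}_B)^2}{4\pi|z_i-z_j|} + O(\epsilon^2)$.

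The main obstacle is making these asymptotics rigorous and uniform in $\epsilon$ in the correct function spaces: one must verify that $(\S_D^{(0)})^{-1}$ is bounded uniformly (which follows from invertibility of $\S_B:L^2(\partial B)\to H^1(\partial B)$ together with translation invariance), and control the remainders in the kernel expansion in the $H^1$-operator norm rather than merely pointwise. Here the helpful subtlety is that one derivative of $1/|x-y|$ gains an extra power of $\epsilon$, so the gradient contribution is already $O(\epsilon^2)$ and does not spoil the estimate. Once these operator bounds are secured, the Neumann expansion above justifies the formal computation and the remaining steps are bookkeeping.
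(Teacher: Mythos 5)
Your argument is correct, and it follows essentially the same route as the proof this paper relies on: the paper itself defers to the cited reference \cite{ammari2020topological}, where the dilute asymptotics are likewise obtained by splitting $\S_D$ into block-diagonal local parts plus $O(\epsilon)$ interaction terms with nearly-constant kernels and expanding $\S_D^{-1}$ accordingly. Your sign bookkeeping works out (the minus from the Neumann-series correction cancels against the minus in the definition of $C_{ij}$), and your observations that the $O(\epsilon)$ correction vanishes on $\partial D_j$ itself and that gradients of the interaction kernel gain an extra power of $\epsilon$ are exactly the points that make the result rigorous.
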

	We will assume that $D_1,\dots,D_N$ are all given by translations of some domain $B$ which is parity-symmetric (\ie{} it satisfies $\P B = B$). For simplicity, we will fix the size of $B$ to be such that $\mathrm{Cap}_B=4\pi$ (this holds \emph{e.g.} if $B$ is the unit sphere). We let $\epsilon>0$ be a small parameter and define the resonators as
	$$
	D_i = B - \left(i-\frac{N+1}{2}\right)(\epsilon^{-1},0,0).
	$$
Notice that the resonators are equally spaced along the $x$-axis and that all the resonators are far away from each other when $\epsilon$ is small. Then, using the matrix $V$ as defined in \eqref{eq:V}, we define the matrix $C_d^v$ as
\begin{equation}
\label{eq:Cdv}C_d^v= V\left(\begin{smallmatrix}
1 & -\epsilon & -\epsilon/2 & \cdots & -\epsilon/(N-1)	\\
-\epsilon & 1 & - \epsilon & \cdots & -\epsilon/(N-2)	\\
-\epsilon/2 & -\epsilon & 1 & \cdots & -\epsilon/(N-3) \\
\svdots & \svdots & \svdots & \sddots & \svdots \\
-\epsilon/(N-1) & -\epsilon/(N-2) & -\epsilon/(N-3) & \cdots & 1 \\	
\end{smallmatrix}\right).
\end{equation}
By \Cref{lem:dilute}, we see that $C_d^v$ gives a dilute approximation of $C^v$ in the sense that
$$C^v = 4\pi a\delta\left(C_d^v + O(\epsilon^2)\right).$$
Also, under this choice of $D$, we can follow the proof of \cite[Theorem 2.2]{ammari2020exceptional} to conclude that the error term in \Cref{thm:res} holds uniformly in $\epsilon$. Therefore, from \Cref{thm:res}, we get the following theorem.
	\begin{thm} \label{thm:res_dilute}
	Let $\gamma_{i}$ be the eigenvalues of $C^v_d$ and $\underline{q_i}$ the corresponding eigenvectors. Then, for small $\epsilon$ and $\delta$, we have the following asymptotic behaviour of $\omega_i$ and $u_i$:
	\begin{align*}
	\omega_i &= \sqrt{\frac{4\pi a\delta \gamma_{i}}{|D_1|}} + O(\delta + \delta^{1/2}\epsilon^2),\\
	u_i(x) &= \underline{q_i} \cdot \underline{S}^{\omega_i} +O(\epsilon^2 + \delta^{1/2}), \quad i=1,...,N.
	\end{align*}
	Here, the error terms hold uniformly for $\epsilon$ and $\delta$ in neighbourhoods of $0$.
\end{thm}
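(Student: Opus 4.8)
The plan is to obtain \Cref{thm:res_dilute} as a direct corollary of \Cref{thm:res}, by inserting the dilute asymptotics of the capacitance coefficients supplied by \Cref{lem:dilute}. The two facts already recorded in the excerpt do almost all of the work: that $C^v = 4\pi a\delta\left(C_d^v + O(\epsilon^2)\right)$, and that the $O(\delta)$ error in \Cref{thm:res} may be taken uniform in $\epsilon$. It then remains to (i) transfer the eigenvalues and eigenvectors of $C^v$ onto those of $C_d^v$ via matrix perturbation theory, and (ii) track how the $O(\epsilon^2)$ matrix perturbation propagates through the square-root frequency formula and through the eigenvector-to-mode correspondence.

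For the frequencies, I would write $C^v = 4\pi a\delta(C_d^v + E)$ with $\|E\| = O(\epsilon^2)$. Since the scalar prefactor $4\pi a\delta$ leaves eigenvectors unchanged and merely rescales eigenvalues, the eigenvalues $\lambda_i$ of $C^v$ satisfy $\lambda_i = 4\pi a\delta\left(\gamma_i + O(\epsilon^2)\right)$, where $\gamma_i$ are the eigenvalues of $C_d^v$. Substituting into $\omega_i = \sqrt{\lambda_i/|D_1|} + O(\delta)$ and, for $\gamma_i$ bounded away from the branch point, expanding $\sqrt{\gamma_i + O(\epsilon^2)} = \sqrt{\gamma_i} + O(\epsilon^2)$ gives
\[
\omega_i = \sqrt{\frac{4\pi a\delta}{|D_1|}}\left(\sqrt{\gamma_i} + O(\epsilon^2)\right) + O(\delta) = \sqrt{\frac{4\pi a\delta\gamma_i}{|D_1|}} + O\!\left(\delta^{1/2}\epsilon^2\right) + O(\delta).
\]
The crucial bookkeeping is that the prefactor $\sqrt{4\pi a\delta/|D_1|}$ is itself $O(\delta^{1/2})$ --- this is precisely the subwavelength scaling --- so the relative $O(\epsilon^2)$ error turns into the absolute error $O(\delta^{1/2}\epsilon^2)$, and collecting terms yields the claimed $O(\delta + \delta^{1/2}\epsilon^2)$.

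For the modes, I would note that the eigenvectors of $C^v$ coincide with those of $C_d^v + E$, which differ from the eigenvectors $\underline{q_i}$ of $C_d^v$ by $O(\epsilon^2)$. Applying the mode formula of \Cref{thm:res} to $C^v$ then gives $u_i = \left(\underline{q_i} + O(\epsilon^2)\right)\cdot\underline{S}^{\omega_i} + O(\delta^{1/2})$, and since the components of $\underline{S}^{\omega_i}$ are bounded in the relevant norm, the eigenvector error enters as an $O(\epsilon^2)$ term directly --- it is \emph{not} multiplied by $\delta^{1/2}$, because the modes themselves are $O(1)$ --- producing $u_i = \underline{q_i}\cdot\underline{S}^{\omega_i} + O(\epsilon^2 + \delta^{1/2})$.

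The hard part will be establishing all of these estimates uniformly in the two independent parameters $\epsilon$ and $\delta$ at once, and in particular controlling the eigenvalue and eigenvector perturbations as the eigenvalues of $C_d^v$ approach one another --- the near-exceptional-point regime of later interest, where the eigenvector conditioning (and hence the constants in the standard perturbation bounds) degenerates. Since $C_d^v\to V$ as $\epsilon\to0$, its eigenvalues are $O(\epsilon)$-perturbations of the diagonal entries $V_{ii}$ and are therefore simple and separated for a fixed configuration whose limiting entries $V_{ii}$ are distinct, which is what keeps the perturbation constants bounded; to state matters cleanly when eigenvalues do cluster I would pass to Riesz spectral projections $\frac{1}{2\pi\iu}\oint(\zeta - C_d^v)^{-1}\de\zeta$ around each cluster, which remain stable under the $O(\epsilon^2)$ perturbation and yield the invariant-subspace form of the estimate. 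The uniformity of the underlying $O(\delta)$ error is inherited from the cited adaptation of the proof of \cite[Theorem~2.2]{ammari2020exceptional}, and that of the $O(\epsilon^2)$ term is built into \Cref{lem:dilute}.
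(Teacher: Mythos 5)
Your proposal is correct and is essentially the paper's own argument: the paper likewise obtains the theorem directly from \Cref{thm:res} by combining the dilute expansion $C^v = 4\pi a\delta\left(C_d^v + O(\epsilon^2)\right)$ from \Cref{lem:dilute} with the observation that the $O(\delta)$ error in \Cref{thm:res} holds uniformly in $\epsilon$ (by following the proof of the cited Theorem~2.2 of \emph{ammari2020exceptional}). The paper records none of your additional bookkeeping --- the square-root expansion producing $O(\delta+\delta^{1/2}\epsilon^2)$, the eigenvector transfer, or the caveat about perturbation constants degenerating near coalescing eigenvalues --- so your write-up is, if anything, more detailed than the proof it is being compared against.
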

This theorem gives a discrete approximation of the resonant frequencies and eigenmodes, in terms of the eigenvalues and eigenvectors of $C_d^v$. It shows that exceptional points of $C_d^v$ will be asymptotic exceptional points of the full differential equation problem \eqref{eq:scattering}. An $N$\textsuperscript{th} order exceptional point of $C_d^v$ is a set of parameter values such that 
\begin{equation*}
\det (C^v_d - x I) = (\gamma-x)^N\quad \text{and}\quad
\dim \ker (C_d^v - \gamma I) = 1,
\end{equation*}
for some $\gamma$. In what follows, we will study these points by expanding the characteristic polynomial of $C_d^v$ and matching the coefficients to those of $(\gamma-x)^N$.

\section{A third-order exceptional point} \label{sec:third}
	In this section, we consider an array of three resonators and search for a third-order exceptional point. Observe, firstly, that since $N$ is odd the centre resonator must have real material parameters in order to be $\P\T$-symmetric.
	We introduce the notation
	$$v_1^2 \delta_1 := \delta a(1 + \iu{b}), \qquad v_2^2 \delta_2 := \delta a c,  \qquad v_3^2 \delta_3 := \delta a(1 - \iu{b}),$$
	for real-valued parameters $a,b$ and $c$. Notice that $a,b,c=O(1)$. 
	In this case, the matrix $C_d^v$ is given by
$$
C^v_d = \begin{pmatrix}
\ds 1+\iu b   &\ds -(1+\iu b) \epsilon &\ds -(1+\iu b) \epsilon/2 \\[0.3em]
-c \epsilon & c\,  & -c\epsilon
\\[0.3em]
\ds -(1-\iu b)  \epsilon/2 &\ds -(1-\iu b) \epsilon &\ds 1-\iu b 
\end{pmatrix}.
$$
	Next, we shall show that the discrete model matrix $C_d^v$ has an exceptional point of order 3.
	
	The characteristic polynomial of $C_d^v$, which is $\det (C^v_d - x I)$, can be easily computed as
	\begin{align*}
	P(x) &= x^3 -(c+2) x^2 + \left(1+b^2+2c - \frac{\epsilon^2}{4}(1+b^2+8c)\right) x  
	\\
	&\quad -c (1+b^2)  \left(1 - \frac{9}{4}\epsilon^2- \epsilon^3\right).
	\end{align*}
    In order to get an exceptional point of order $3$, we require 
    $$
    P(x) = (x-\gamma)^3 = x^3 -3\gamma x^2 + 3\gamma^2x -\gamma^3,
    $$
    and that $	\dim \ker (C_d^v - \gamma I) = 1$, for some $\gamma$.
	Comparing the above two expressions for $P(x)$, we see that $b,c$ and $\epsilon$ should satisfy
	\begin{align}
		3\gamma &= c+2, \label{eq_zero} \\
	    3\gamma^2 &= 1+b^2+2c - \frac{\epsilon^2}{4}(1+b^2+8c),\label{eq_one}
	    \\
	    \gamma^3 &=c (1+b^2)  \left(1 - \frac{9}{4}\epsilon^2- \epsilon^3\right).\label{eq_two}
	\end{align}	
	
	\begin{lemma} \label{lem:except1}
	For any small $\epsilon>0$, there exist $b>0$ and $c>0$ satisfying \eqref{eq_one} and \eqref{eq_two}.
	Moreover, we have
	\begin{align}
	b &=  b_1 \epsilon + O(\epsilon^2)\label{eq:b_asymp},
	\\
	c &= 1 + c_1\epsilon + O(\epsilon^2),\label{eq:c_asymp}
	\end{align}
	where $b_1$ and $c_1$ are specified as the roots of given polynomials. Therefore, for such $b$ and $c$, the characteristic polynomial of $C_d^v$ is given by
    $$
    P(x) = (x-\gamma)^3 \quad \mbox{with } \gamma = \frac{c+2}{3} = 1 + O(\epsilon).
    $$
	
	\end{lemma}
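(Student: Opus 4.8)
The plan is to use \eqref{eq_zero} to eliminate $\gamma=(c+2)/3$ and recast the two genuine constraints \eqref{eq_one} and \eqref{eq_two} as a polynomial system in $(b,c)$ with $\epsilon$ as parameter, then solve it by a \emph{desingularized} implicit function theorem argument. Substituting $\gamma=(c+2)/3$ and writing $u:=c-1$, a short computation reduces \eqref{eq_one} to
\begin{equation*}
u^2-3b^2=-\tfrac34\epsilon^2\left(9+8u+b^2\right),
\end{equation*}
and \eqref{eq_two} to
\begin{equation*}
9u^2+u^3-27b^2-27ub^2=-27(1+u)(1+b^2)\left(\tfrac94\epsilon^2+\epsilon^3\right).
\end{equation*}
At $\epsilon=0$ both are solved by $u=b=0$, i.e. $c=1,\ b=0,\ \gamma=1$, which is the expected triple point of the decoupled diagonal matrix.

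First I would record that the implicit function theorem cannot be applied naively at $(b,c,\epsilon)=(0,1,0)$: the Jacobian of the two equations in $(b,c)$ vanishes there, which is exactly the analytic signature of the third-order degeneracy. To expose the correct scaling I would set $b=\epsilon\tilde b$ and $u=\epsilon\tilde u$ and divide each equation by $\epsilon^2$. The trouble — and what I expect to be the main obstacle — is that both rescaled equations have the \emph{same} leading part as $\epsilon\to0$, namely $\tilde u^2-3\tilde b^2=-\tfrac{27}{4}$, so the rescaled system is still degenerate at $\epsilon=0$: the two conics coincide and give only one relation between $\tilde u$ and $\tilde b$.

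The key step is to break this degeneracy by producing an independent second equation. I would keep the first rescaled equation
\begin{equation*}
G_1:=\tilde u^2-3\tilde b^2+\tfrac{27}4+6\epsilon\tilde u+\tfrac34\epsilon^2\tilde b^2=0,
\end{equation*}
and replace the second by the combination $(\text{second})-9\,G_1$, in which the common conic $9(\tilde u^2-3\tilde b^2)$ cancels, leaving an expression divisible by $\epsilon$. Dividing out this factor yields a second equation $G_2=0$ that is polynomial in $(\tilde u,\tilde b,\epsilon)$, equivalent to the original pair whenever $\epsilon\neq0$, and whose value at $\epsilon=0$ is $\tilde u^3-27\tilde u\tilde b^2+27+\tfrac{27}4\tilde u=0$. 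Eliminating $\tilde b^2$ through $G_1|_{\epsilon=0}$ (which gives $\tilde b^2=\tilde u^2/3+9/4$) turns $G_2|_{\epsilon=0}$ into the cubic $8\tilde u^3+54\tilde u-27=0$. This cubic is strictly increasing, hence has a unique real root $c_1$, which is positive since the polynomial is negative at $0$ and positive at $\tfrac12$; then $\tilde b$ is fixed as the positive root $b_1=\sqrt{c_1^2/3+9/4}>0$. These are the polynomials referred to in \eqref{eq:b_asymp}--\eqref{eq:c_asymp}.

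Finally I would compute the Jacobian $\partial(G_1,G_2)/\partial(\tilde u,\tilde b)$ at $(\tilde u,\tilde b,\epsilon)=(c_1,b_1,0)$; using $b_1^2=c_1^2/3+9/4$ it simplifies to $b_1\left(-144c_1^2-324\right)$, which is nonzero because $b_1>0$. The implicit function theorem then provides smooth branches $\tilde u(\epsilon),\tilde b(\epsilon)$ with $\tilde u(0)=c_1$, $\tilde b(0)=b_1$, and undoing the scaling gives $c=1+c_1\epsilon+O(\epsilon^2)$ and $b=b_1\epsilon+O(\epsilon^2)$, both positive for small $\epsilon>0$. Since \eqref{eq_one} and \eqref{eq_two} then hold while \eqref{eq_zero} defines $\gamma$, matching all four coefficients of the cubic shows $P(x)=(x-\gamma)^3$ with $\gamma=(c+2)/3=1+O(\epsilon)$, as claimed. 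The crux throughout is the leading-order coincidence of the two conics; subtracting the correct multiple and dividing by $\epsilon$ is precisely what restores a nondegenerate system amenable to the implicit function theorem.
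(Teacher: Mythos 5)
Your argument is correct: I checked the reduced equations (with $u=c-1$, \eqref{eq_one} does become $u^2-3b^2=-\tfrac34\epsilon^2(9+8u+b^2)$ and \eqref{eq_two} becomes $9u^2+u^3-27b^2-27ub^2=-27(1+u)(1+b^2)(\tfrac94\epsilon^2+\epsilon^3)$), the coincidence of the two rescaled conics at $\epsilon=0$, the limit $G_2|_{\epsilon=0}=\tilde u^3-27\tilde u\tilde b^2+27+\tfrac{27}{4}\tilde u$, and the Jacobian $\tilde b\bigl(-90\tilde u^2-162\tilde b^2+\tfrac{81}{2}\bigr)=b_1(-144c_1^2-324)\neq 0$; moreover your cubic $8\tilde u^3+54\tilde u-27=0$ is exactly the paper's $c_1^3+\tfrac{27}{4}c_1-\tfrac{27}{8}=0$ multiplied by $8$, and your $b_1=\sqrt{c_1^2/3+9/4}$ agrees with the paper's $b_1$. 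Your route, however, is genuinely different from the paper's. The paper eliminates $1+b^2$ via \eqref{eq_one_p} and collapses the system to the single cubic \eqref{eq_three} in $c$ alone, whose $\epsilon=0$ limit is $-\tfrac{8}{27}(c-1)^3=0$, and then simply asserts that this ``regularly perturbed'' cubic has a real root $c=1+c_1\epsilon+O(\epsilon^2)$, determining $c_1$ by substituting the ansatz and recovering $b$ afterwards from \eqref{eq_one_p}. That assertion glosses over a real subtlety: $c=1$ is a \emph{triple} root of the unperturbed cubic, and a triple root perturbed at order $\epsilon^2$ generically splits at distance $\epsilon^{2/3}$; the $O(\epsilon)$ scaling holds only because the $O(\epsilon^2)$ coefficient of \eqref{eq_three} itself vanishes linearly at $c=1$, which the paper does not make explicit. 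Your blow-up $(u,b)=(\epsilon\tilde u,\epsilon\tilde b)$, together with the combination $(\text{second})-9G_1$ divided by $\epsilon$ and the implicit function theorem at a point with nonvanishing Jacobian, makes precisely this mechanism visible and buys full rigour: existence of the branch, its smoothness in $\epsilon$, the $O(\epsilon^2)$ remainders in \eqref{eq:b_asymp}--\eqref{eq:c_asymp}, and positivity of $b$ and $c$ all come out at once, at the cost of somewhat more computation than the paper's quick scalar elimination. The final step (coefficient matching to conclude $P(x)=(x-\gamma)^3$ with $\gamma=(c+2)/3$) is the same in both treatments.
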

	\proof
	From \eqref{eq_zero} we have that $\gamma = (c+2)/3$. Then, \eqref{eq_one} can be written as
	\begin{align}
	   1+b^2= \left(1-\frac{\epsilon^2}{4}\right)^{-1} \left[ \frac{(c+2)^2}{3} - 2(1-\epsilon^2) c \right]. \label{eq_one_p}
	\end{align}
	Then, substituting the above into \eqref{eq_two}, we get
	\begin{align}
	     \left(1-\frac{\epsilon^2}{4}\right)\frac{(c+2)^3}{27} = c\left(1 - \frac{9}{4}\epsilon^2- \epsilon^3\right)\left[\frac{(c+2)^2}{3} - 2(1-\epsilon^2) c\right], \label{eq_three}
	\end{align}
	which is a cubic polynomial in $c$. If $\epsilon=0$, it has a solution $c=1$. Since \eqref{eq_three}  is a regularly perturbed cubic equation by small $\epsilon>0$, it has a real root $c$ satisfying 
	$$
	c = 1 + c_1\epsilon + O(\epsilon^2).
	$$
	Substituting the above expansion to \eqref{eq_three}, it is straightforward to see that $c_1$ is the real root of the polynomial $c_1^3+\frac{27}{4}c_1-\frac{27}{8}=0$, \emph{i.e.} $c_1\approx0.483...$ .
	Then, by \eqref{eq_one_p}, we can compute the expansion of $b$ as
	$$
	b =  b_1 \epsilon + O(\epsilon^2),
	$$
	where $b_1=\sqrt{\frac{9}{4}+\frac{c_1^2}{3}}\approx1.53...$ .
	\qed
	
	We next show that, at the exceptional point, all the eigenvectors coalesce.

	\begin{lemma} \label{lem:except2}
	For a given small $\epsilon>0$, let $b$ and $c$ be chosen as in Lemma \ref{lem:except1} and let $\gamma$ be the corresponding eigenvalue of $C_d^v$. 
	Then we have
	$$
	\dim \ker (C_d^v - \gamma I) = 1.
	$$
	
	\end{lemma}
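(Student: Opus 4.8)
The plan is to translate the claim into a statement about the rank of $M := C_d^v - \gamma I$. For a $3\times 3$ matrix one has $\dim\ker M = 3 - \operatorname{rank} M$, so $\dim\ker M = 1$ is equivalent to $\operatorname{rank} M = 2$. The upper bound $\operatorname{rank} M \le 2$ is free: by Lemma~\ref{lem:except1} we have $P(x) = (x-\gamma)^3$, hence $\det M = P(\gamma) = 0$. The entire task is therefore to prove the lower bound $\operatorname{rank} M \ge 2$, that is, to exhibit a single $2\times 2$ minor of $M$ that does not vanish for small $\epsilon > 0$.

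The essential subtlety, and the main obstacle, is that one cannot simply evaluate such a minor at $\epsilon = 0$. Indeed, when $\epsilon = 0$ Lemma~\ref{lem:except1} gives $b = 0$, $c = 1$ and $\gamma = 1$, so $C_d^v = I$ and $M$ is the zero matrix; at this point the triple eigenvalue is a trivial diagonalizable degeneracy with $\dim\ker = 3$. The collapse of the geometric multiplicity down to $1$ is produced entirely by the first-order correction in $\epsilon$, so I must retain the leading-order $\epsilon$-behaviour of every entry of $M$ rather than its value at $\epsilon = 0$.

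Concretely, I would insert the expansions $b = b_1\epsilon + O(\epsilon^2)$, $c = 1 + c_1\epsilon + O(\epsilon^2)$ and $\gamma = (c+2)/3 = 1 + (c_1/3)\epsilon + O(\epsilon^2)$ into $M$ and factor out $\epsilon$, writing $M = \epsilon M^{(1)} + O(\epsilon^2)$ with
$$
M^{(1)} = \begin{pmatrix}
\iu b_1 - c_1/3 & -1 & -1/2 \\
-1 & 2c_1/3 & -1 \\
-1/2 & -1 & -\iu b_1 - c_1/3
\end{pmatrix}.
$$
The decisive observation is that the top-left $2\times 2$ minor of $M^{(1)}$ equals $(\iu b_1 - c_1/3)(2c_1/3) - 1$, whose real part is $\Re\big[(\iu b_1 - c_1/3)(2c_1/3) - 1\big] = -1 - \tfrac{2}{9}c_1^2 < 0$ and is therefore nonzero. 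Consequently the corresponding $2\times 2$ minor of $M$ itself equals $\epsilon^2\big((\iu b_1 - c_1/3)(2c_1/3) - 1\big) + O(\epsilon^3)$, which has nonzero leading coefficient and hence does not vanish for all sufficiently small $\epsilon > 0$.

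This yields $\operatorname{rank} M \ge 2$; combined with $\operatorname{rank} M \le 2$ it gives $\operatorname{rank} M = 2$, and therefore $\dim\ker(C_d^v - \gamma I) = 1$, as required. I expect everything after the $\epsilon$-expansion to be routine: the nonvanishing of the relevant minor is robust, being guaranteed by the strictly negative real part of its leading coefficient, independently of the precise numerical values of $b_1$ and $c_1$.
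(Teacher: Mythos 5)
Your proof is correct and follows essentially the same route as the paper's: both reduce the claim to $\operatorname{rank}(C_d^v-\gamma I)=2$, note the upper bound is automatic since $\gamma$ is an eigenvalue, expand $C_d^v-\gamma I=\epsilon M^{(1)}+O(\epsilon^2)$ using the asymptotics of Lemma~\ref{lem:except1}, and verify that the leading-order matrix has rank at least two. The only difference is cosmetic: the paper argues that the first and third columns of the leading-order matrix are linearly independent (invoking $b_1,c_1\neq 0$), whereas you evaluate the top-left $2\times2$ minor, whose real part $-1-\tfrac{2}{9}c_1^2$ is strictly negative independently of the values of $b_1$ and $c_1$, which is a marginally more explicit and robust check.
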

\proof
Since $\gamma = (c+2)/3$, we have
$$
C_d^v - \gamma I = \begin{pmatrix}
1 + \iu b - \frac{c+2}{3} & -(1+\iu b) \epsilon & -(1+\iu b) \epsilon/2
\\
-c\epsilon & c-\frac{c+2}{3} & -c \epsilon
\\
-(1-\iu b) \epsilon/2 & -(1-\iu b) \epsilon & (1-\iu b) - \frac{c+2}{3}
\end{pmatrix}.
$$
We will show that $\mbox{rank}(C_d^v-\gamma I)=2$, which implies the conclusion. 
First of all, since $\gamma$ is an eigenvalue, $\mbox{rank}(C_d^v-\gamma I) < 3$. We therefore only need to show that two of the column vectors of $C_d^v-\gamma I$ are linearly independent. For small $\epsilon>0$, by \eqref{eq:b_asymp} and \eqref{eq:c_asymp}, we have
\begin{equation} \label{eq:C-gI}
C_d^v - \gamma I = \epsilon\begin{pmatrix}
-\frac{c_1}{3}+ \iu b_1 & -1 & -1/2
\\
-1 & \frac{2c_1}{3} & - 1
\\
-1/2 & -1 & -\frac{c_1}{3}- \iu b_1
\end{pmatrix} +O(\epsilon^2).
\end{equation}
Observe that both $b_1$ and $c_1$ are non-zero and real. From the asymptotic behaviour of \eqref{eq:C-gI}, we see that the first and the last column vectors are linearly independent for sufficiently small $\epsilon$. \qed

By the above two lemmas, we see that the trimer has an exceptional point of order $3$.

\begin{thm}
For small $\epsilon$ and $\delta$ the trimer $D$ has an asymptotic exceptional point of order $3$ at the resonant frequency $\omega^*$ satisfying
$$\omega^* = \sqrt{\frac{4\pi a\delta(c+2)}{3|D_1|}} + O(\delta + \delta^{1/2}\epsilon^2).$$
Here, $c \in \R$ is a constant satisfying \eqref{eq_one} and \eqref{eq_two}.
\end{thm}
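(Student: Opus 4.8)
The plan is to treat this theorem as a direct corollary of the two preceding lemmas together with \Cref{thm:res_dilute}, rather than as a computation in its own right. \Cref{lem:except1} and \Cref{lem:except2} between them establish that, at the parameter values they produce, the matrix $C_d^v$ has a genuine order-$3$ exceptional point: its characteristic polynomial equals $(x-\gamma)^3$ (algebraic multiplicity $3$) with $\gamma=(c+2)/3$, while $\dim\ker(C_d^v-\gamma I)=1$ (geometric multiplicity $1$). \Cref{thm:res_dilute} is then the bridge that transfers this spectral degeneracy of the discrete model to the resonant frequencies and modes of the differential problem \eqref{eq:scattering}.

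Concretely, I would carry out the following steps. First, fix a small $\epsilon>0$ and invoke \Cref{lem:except1} to obtain $b,c>0$ solving \eqref{eq_one} and \eqref{eq_two}, so that all three eigenvalues $\gamma_i$ of $C_d^v$ coincide with the single value $\gamma=(c+2)/3$; since $c=1+O(\epsilon)$ this common eigenvalue is real and positive, so the branch of the square root with positive real part used in \Cref{thm:res_dilute} is well defined. Second, substitute $\gamma_i=\gamma$ into the frequency formula of \Cref{thm:res_dilute}, which gives
$$\omega_i=\sqrt{\frac{4\pi a\delta\gamma}{|D_1|}}+O(\delta+\delta^{1/2}\epsilon^2)=\sqrt{\frac{4\pi a\delta(c+2)}{3|D_1|}}+O(\delta+\delta^{1/2}\epsilon^2)$$
for $i=1,2,3$; defining $\omega^*$ to be this common leading-order value yields the displayed asymptotics. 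Third, for the eigenmodes I would use \Cref{lem:except2}: because the eigenvector direction is unique, the vectors $\underline{q_i}$ all coalesce to a single direction $\underline{q}$, so that the three resonant modes $u_i=\underline{q_i}\cdot\underline{S}^{\omega_i}+O(\epsilon^2+\delta^{1/2})$ agree at leading order as well. Coincidence of both the resonant frequencies and the resonant modes at leading order in the asymptotic parameters is exactly the definition of an asymptotic exceptional point of order $3$, which is the claim.

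\textbf{The main obstacle} I anticipate is conceptual rather than computational: reconciling the ``eigenpairs $(\gamma_i,\underline{q_i})$'' language of \Cref{thm:res_dilute} with the fact that $C_d^v$ is \emph{defective} precisely at the exceptional point, where it possesses only one honest eigenvector. The clean way to handle this is to read the degeneracy asymptotically — for $(\epsilon,\delta)$ near the critical values the matrix is diagonalizable with three nearly coincident eigenvalues and eigenvectors, and the exceptional point is the limit in which they merge. The error terms $O(\delta+\delta^{1/2}\epsilon^2)$ and $O(\epsilon^2+\delta^{1/2})$ then quantify the deviation from exact degeneracy, which is precisely why the coincidence is only \emph{asymptotic}; it is therefore essential that these error bounds hold uniformly in $\epsilon$ and $\delta$ near $0$, as guaranteed in \Cref{thm:res_dilute}. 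Since the genuine work — solving the matching equations \eqref{eq_one} and \eqref{eq_two} and verifying the rank condition — is already done in \Cref{lem:except1} and \Cref{lem:except2}, the remaining argument is a short, essentially bookkeeping-level assembly.
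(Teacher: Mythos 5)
Your proposal is correct and follows essentially the same route as the paper: the paper gives no separate proof of this theorem beyond the sentence ``By the above two lemmas, we see that the trimer has an exceptional point of order $3$,'' i.e.\ it treats the result exactly as you do --- as a direct assembly of \Cref{lem:except1} (characteristic polynomial equals $(x-\gamma)^3$ with $\gamma=(c+2)/3$), \Cref{lem:except2} (one-dimensional kernel), and the frequency formula of \Cref{thm:res_dilute}. Your additional remark about reconciling the eigenpair language of \Cref{thm:res_dilute} with the defectiveness of $C_d^v$ at the exceptional point is a sensible clarification of why the coincidence is only \emph{asymptotic}, and is consistent with the paper's discussion in the remark following the theorem.
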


\begin{remark}
	As discussed in \cite{ammari2020exceptional}, we do not expect the original differential problem \eqref{eq:scattering} to support exact exceptional points, in the sense of exactly degenerate resonant frequencies and coalescence of eigenmodes. The exceptional points studied here are linked to the $\P\T$-symmetry of the problem. Even under the assumption of symmetric gain and loss, the problem \eqref{eq:scattering} is not $\P\T$-symmetric, since the radiation condition swaps sign under complex conjugation. Nevertheless, for small $\epsilon$ and $\delta$, the leading order approximation given by $C_d^v$ is indeed $\P\T$-symmetric, leading to  asymptotic, approximate, exceptional points. This is demonstrated by Figures~\ref{fig:diluteN}~and~\ref{fig:fullN} which depict the coincidence of the eigenvalues of $C_d^v$ and the resonant frequencies of the full differential system, respectively.
\end{remark}

\begin{remark}
	The approach used here (making an approximation under the assumption that the resonators are arbitrarily far apart) can also be used to find the exceptional point supported by a $\mathcal{PT}$-symmetric pair of resonators (see \Cref{sec:2res} for details). This structure was previously studied in \cite{ammari2020exceptional} using a more general approach that requires no assumptions of diluteness.
\end{remark}

\section{Exceptional points of order four} \label{sec:fourth-order}
We now seek fourth-order exceptional points, and assume that $D$ is an array of four $\mathcal{PT}$-symmetric resonators with material parameters given by
$$v_1^2 \delta_1 := \delta a(1 + \iu{b}), \qquad v_2^2 \delta_2 := \delta a (c+\iu d),  \qquad v_3^2 \delta_3 := \delta a(c - \iu{d}),\qquad v_4^2 \delta_4:=\delta a(1 - \iu{b}).$$
In this setting, the matrix $C_d^v$, as defined in \Cref{sec:cap}, is
$$
C^v_d = \begin{pmatrix}
\ds 1+\iu b   &\ds -(1+\iu b) \epsilon &\ds -(1+\iu b) \epsilon/2 & -(1+\iu b) \epsilon/3 \\[0.3em]
-(c+\iu d) \epsilon & c+\iu d\,  & -(c+\iu d) \epsilon & -(c+\iu d) \epsilon/2 \\[0.3em]
-(c-\iu d) \epsilon/2 & -(c-\iu d) \epsilon & c-\iu d\,  & -(c-\iu d) \epsilon \\[0.3em]
-(1-ib)  \epsilon/3 & -(1-ib)  \epsilon/2 & -(1-ib) \epsilon &\ds 1-ib 
\end{pmatrix}
$$
and the characteristic polynomial $P(x)=\det (C^v_d - x I)$ can be computed as
\begin{align*}
P(x) &= x^4 -2(c+1) x^3
\\
&\quad + \left(1+b^2+c(4+c)+d^2 - \epsilon^2\frac{1}{18}(2+2b^2+9c(5+2c)-27bd+18d^2)\right) x^2  
\\
&\quad +\frac{1}{18}\left( 9(c^2+d^2)(-4+9\epsilon^2+4\epsilon^3)+c(1+b^2)(-36+\epsilon^2(49+12\epsilon)) \right)x
\\
&\quad +\frac{1}{144} (1+b^2)(c^2+d^2)\left(144 - \epsilon^2( 520 + 384 \epsilon + 23 \epsilon^2)\right).
\end{align*}
In order to get an exceptional point of order $4$, we require that
$$
P(x) = (x-\gamma)^4 = x^4 -4\gamma x^3 + 6\gamma^2x^2-4\gamma^3 x +\gamma^4,
$$
for some $\gamma$. Comparing the two expressions for $P$, we see that $\gamma = (c+1)/2$ and that
\begin{align}
\frac{3}{2}(c+1)^2 &= 1+b^2+c(4+c)+d^2 - \frac{\epsilon^2}{18}\left(2+2b^2+9c(5+2c)-27bd+18d^2\right), \label{eq:41}
\\
\frac{1}{2}(c+1)^3 &= -\frac{1}{18}\left( 9(c^2+d^2)(-4+9\epsilon^2+4\epsilon^3)+c(1+b^2)(-36+49\epsilon^2+12\epsilon^3) \right), \label{eq:42}
\\
\frac{1}{16}(c+1)^4 &=\frac{1}{144} (1+b^2)(c^2+d^2)(144-\epsilon^2( 520 + 384 \epsilon + 23 \epsilon^2)). \label{eq:43}
\end{align}
We are interested in solutions to this system for small $\epsilon$. At $\epsilon = 0$, we have a unique solution given by
$$ b= 0, \qquad c = 1, \qquad d = 0.$$
For small but nonzero $\epsilon$, we make the ansatz
\begin{align*}
	b =  b_1\epsilon + O(\epsilon^2),  \qquad c = 1 + c_1\epsilon + O(\epsilon^2),  \qquad d =  d_1\epsilon + O(\epsilon^2).
	\end{align*}
	By substituting into \eqref{eq:41}--\eqref{eq:43} we find, after simplifications, that
\begin{align} 
	c_1 ^2 - 2(b_1^2+d_1^2) +\frac{65}{9}=0, \label{eq:b1c1d1.1}\\
	c_1^3 + c_1\left(\frac{49}{9} - 4b_1^2\right) + \frac{16}{3} = 0, \label{eq:b1c1d1.2}\\
	c_1^4 - 16(c_1^2+d_1^2)\left(b_1^2-\frac{1}{9}\right) +\frac{32}{3}c_1 + 16b_1^2-24b_1d_1+\frac{23}{9} = 0. \label{eq:b1c1d1.3}
	\end{align}
	Moreover, it is clear that if $(b_1,c_1,d_1)$ is a solution, then $(-b_1,c_1,-d_1)$ is also a solution. Solving the above system numerically, we obtain 4 solutions up to this symmetry (or 8 solutions in total), presented in \Cref{fig:sols4}.

\begin{figure}
	\begin{subfigure}[b]{0.45\linewidth}
		\centering
		\begin{tikzpicture}[scale=0.5]
		\begin{scope}
		\def\b{1.87}
		\def\d{0.56}
		\draw[fill=gray,gray] (0,0) rectangle (1,\b);
		\node at (0.5,-0.5) {\b};
		\draw[fill=gray,gray] (2,0) rectangle (3,\d);
		\node at (2.5,-0.5) {\d};
		\draw[fill=gray,gray] (4,0) rectangle (5,-\d);
		\draw[fill=gray,gray] (6,0) rectangle (7,-\b);
		\draw[dotted] (-0.5,0) -- (7.5,0);
		\path (-0.5,-2) -- (-0.5,2);
		\end{scope}
		\end{tikzpicture}
		\captionsetup{type=figure}
		\caption{Exceptional point satisfying $b_1d_1 > 0$ and $|b_1| > |d_1|$, with $c_1 = 0.654$.} \label{fig:sol4a}
	\end{subfigure}
	\hspace{20pt}
	\begin{subfigure}[b]{0.45\linewidth}
		\centering
		\begin{tikzpicture}[scale=0.5]
		\begin{scope}
		\def\b{0.0456}
		\def\d{2.00}
		\draw[fill=gray,gray] (0,0) rectangle (1,\b);
		\node at (0.5,-0.5) {\b};
		\draw[fill=gray,gray] (2,0) rectangle (3,\d);
		\node at (2.5,-0.5) {\d};
		\draw[fill=gray,gray] (4,0) rectangle (5,-\d);
		\draw[fill=gray,gray] (6,0) rectangle (7,-\b);
		\draw[dotted] (-0.5,0) -- (7.5,0);
		\path (-0.5,-2) -- (-0.5,2);
		\end{scope}
		\end{tikzpicture}
		\captionsetup{type=figure}
		\caption{Exceptional point satisfying   $b_1d_1 > 0$ and $|b_1| < |d_1|$, with $c_1 = -0.863$.}
	\end{subfigure}
	\vspace{5pt}
	
	\begin{subfigure}[b]{0.45\linewidth}
		\centering
		\begin{tikzpicture}[scale=0.5]
		\begin{scope}
		\def\b{1.70}
		\def\d{-1.13}
		\draw[fill=gray,gray] (0,0) rectangle (1,\b);
		\node at (0.5,-0.5) {\b};
		\draw[fill=gray,gray] (2,0) rectangle (3,\d);
		\node at (2.5,0.5) {\d};
		\draw[fill=gray,gray] (4,0) rectangle (5,-\d);
		\draw[fill=gray,gray] (6,0) rectangle (7,-\b);
		\draw[dotted] (-0.5,0) -- (7.5,0);
		\path (-0.5,-2) -- (-0.5,2);
		\end{scope}
		\end{tikzpicture}
		\captionsetup{type=figure}		
		\caption{Exceptional point satisfying   $b_1d_1 < 0$ and $|b_1| > |d_1|$, with $c_1 = 1.07$.}
	\end{subfigure}
	\hspace{20pt}
	\begin{subfigure}[b]{0.45\linewidth}
		\centering
		\begin{tikzpicture}[scale=0.5]
		\begin{scope}
		\def\b{0.734}
		\def\d{-1.93}
		\draw[fill=gray,gray] (0,0) rectangle (1,\b);
		\node at (0.5,-0.5) {\b};
		\draw[fill=gray,gray] (2,0) rectangle (3,\d);
		\node at (2.5,0.5) {\d};
		\draw[fill=gray,gray] (4,0) rectangle (5,-\d);
		\draw[fill=gray,gray] (6,0) rectangle (7,-\b);
		\draw[dotted] (-0.5,0) -- (7.5,0);
		\path (-0.5,-2) -- (-0.5,2);
		\end{scope}
		\end{tikzpicture}
		\captionsetup{type=figure}
		\caption{Exceptional point satisfying   $b_1d_1 < 0$ and $|b_1| < |d_1|$, with $c_1 = -1.15$.} \label{fig:sol4d}
	\end{subfigure}
	\caption{A system of four $\mathcal{PT}$-symmetric resonators supports four asymptotic exceptional points. Here, we plot the leading order coefficients of the imaginary parts of the material parameters (the gain or loss) at each of the four exceptional points.}
	\label{fig:sols4}
\end{figure}
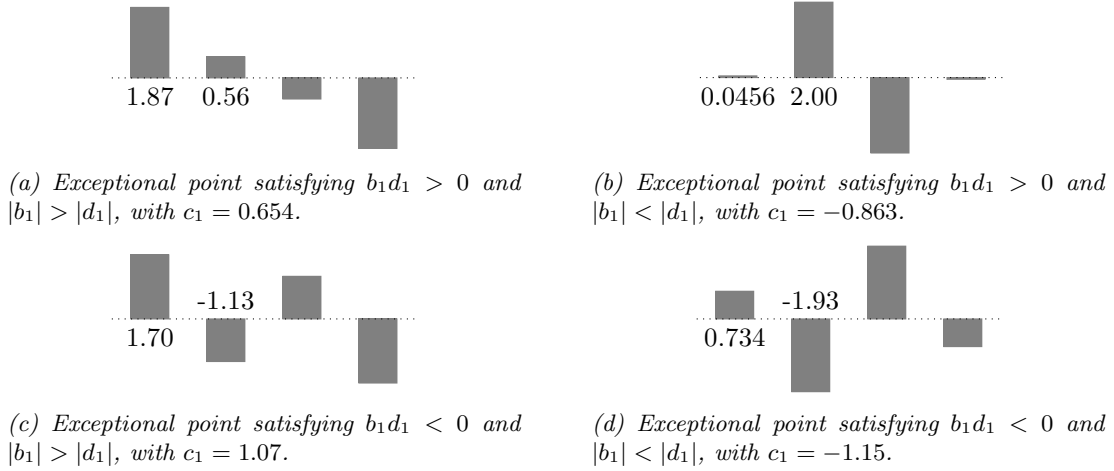

	The four different solutions can be described in terms of the relative magnitude and signs of $b_1$ and $d_1$: each solution corresponds to one of the four cases depending on if $b_1$ and $d_1$ have the same or opposite sign, and if $b_1$ or if $d_1$ is larger in magnitude. This is depicted in \Cref{fig:sols4}. A solution with the qualitative features $b_1d_1 > 0$ and $|b_1| > |d_1|$ was previously observed in the setting of a Hamiltonian system in \cite{zhang2020high}.

We have used formal asymptotics to approximate the continuously differentiable solutions to \eqref{eq:41}--\eqref{eq:43}. Next, we show that, at a solution to this system, all eigenvectors of $C_d^v$ coalesce.
\begin{lemma} \label{lem:except5}
	For a given small $\epsilon>0$, let $b,c$ and $d$ be solutions to \eqref{eq:41}--\eqref{eq:43} and let $\gamma$ be the corresponding eigenvalue of $C_d^v$. 
	Then we have
	$$
	\dim \ker (C_d^v - \gamma I) = 1.
	$$	
\end{lemma}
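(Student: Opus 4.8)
The plan is to show that $\dim\ker(C_d^v-\gamma I)=1$ by proving that the rank of $C_d^v-\gamma I$ equals $3$, exactly mirroring the strategy used in the proof of \Cref{lem:except2}. Since $\gamma$ is an eigenvalue, we automatically have $\operatorname{rank}(C_d^v-\gamma I)<4$, so the entire task reduces to exhibiting three linearly independent columns (equivalently, a non-vanishing $3\times 3$ minor) for all sufficiently small $\epsilon>0$.

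First I would substitute $\gamma=(c+1)/2$ together with the leading-order ansatz $b=b_1\epsilon+O(\epsilon^2)$, $c=1+c_1\epsilon+O(\epsilon^2)$, $d=d_1\epsilon+O(\epsilon^2)$ into $C_d^v-\gamma I$. The key observation, just as in \eqref{eq:C-gI}, is that every diagonal entry becomes $O(\epsilon)$: the $(1,1)$ and $(4,4)$ entries are $(1\pm\iu b)-(c+1)/2 = -\tfrac{c_1}{2}\epsilon \pm \iu b_1\epsilon+O(\epsilon^2)$, while the $(2,2)$ and $(3,3)$ entries are $(c\pm\iu d)-(c+1)/2 = \tfrac{c_1}{2}\epsilon\pm\iu d_1\epsilon+O(\epsilon^2)$, and the off-diagonal entries are already $O(\epsilon)$. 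Hence one can factor out a single power of $\epsilon$ and write
\begin{equation*}
C_d^v-\gamma I = \epsilon\begin{pmatrix}
-\tfrac{c_1}{2}+\iu b_1 & -1 & -\tfrac12 & -\tfrac13 \\
-1 & \tfrac{c_1}{2}+\iu d_1 & -1 & -\tfrac12 \\
-\tfrac12 & -1 & \tfrac{c_1}{2}-\iu d_1 & -1 \\
-\tfrac13 & -\tfrac12 & -1 & -\tfrac{c_1}{2}-\iu b_1
\end{pmatrix}+O(\epsilon^2).
\end{equation*}
Call the leading matrix $M_0$. It then suffices to show that $M_0$ has rank at least $3$, because a non-vanishing $3\times 3$ minor of $M_0$ persists under the $O(\epsilon^2)$ perturbation for small $\epsilon$, forcing $\operatorname{rank}(C_d^v-\gamma I)\ge 3$ and hence exactly $3$.

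The concrete step is therefore to select a suitable triple of columns of $M_0$ and verify that the corresponding $3\times 3$ determinant is nonzero. Following the pattern of \Cref{lem:except2}, I would try the minor formed from (say) the first three rows and the first, second, and fourth columns, compute its determinant as an explicit polynomial in $b_1,c_1,d_1$, and then argue it does not vanish. The main obstacle is precisely this non-vanishing verification: unlike the clean trimer case where two columns were manifestly independent, here the entries depend on all three parameters $b_1,c_1,d_1$, which themselves solve the coupled system \eqref{eq:b1c1d1.1}--\eqref{eq:b1c1d1.3} and only take the four numerical values listed in \Cref{fig:sols4}. I would handle this by substituting the constraint relations to simplify the minor symbolically, and if a clean algebraic argument proves elusive, fall back on checking the determinant numerically at each of the four solution triples (and their symmetric partners $(-b_1,c_1,-d_1)$), which is legitimate since the solutions are isolated and finite in number. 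The genuinely delicate point is ensuring the chosen minor is nonzero \emph{simultaneously} at all four exceptional points, so I would either find a single minor that works throughout or, if necessary, argue case-by-case that for each solution at least one $3\times 3$ minor survives.
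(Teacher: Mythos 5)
Your setup is correct and matches the paper's proof exactly: the paper also argues that $\operatorname{rank}(C_d^v-\gamma I)=3$, factors out $\epsilon$ using $\gamma=(c+1)/2$ and the ansatz for $b,c,d$, and arrives at precisely the leading matrix $M_0$ you wrote down, concluding via a nonvanishing $3\times 3$ minor that persists under the $O(\epsilon^2)$ perturbation. Where you diverge is the final verification, and here the paper has a trick that dissolves the ``genuinely delicate point'' you flag. Instead of a minor containing the $(1,1)$ entry (which drags in $b_1$ and forces you to invoke the constraint system or numerics), the paper takes the \emph{upper-right} $3\times 3$ block of $M_0$, i.e.\ rows $1$--$3$ and columns $2$--$4$, whose determinant is
\begin{equation*}
\det \begin{pmatrix}-1 & -1/2 & -1/3\\ \tfrac{c_1}{2} + \iu d_1 & - 1 & -1/2\\ -1 & \tfrac{c_1}{2} - \iu d_1 &-1\end{pmatrix} = -\frac{1}{12}\left((c_1+3)^2+4d_1^2+2\right),
\end{equation*}
which is real and strictly negative for \emph{all} real $c_1,d_1$ --- no dependence on $b_1$, no use of \eqref{eq:b1c1d1.1}--\eqref{eq:b1c1d1.3}, no case analysis over the four solutions. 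This matters beyond elegance: the lemma is stated for arbitrary solutions of \eqref{eq:41}--\eqref{eq:43}, so your fallback of checking the determinant numerically at the four triples of \Cref{fig:sols4} (and their partners under $(b_1,d_1)\mapsto(-b_1,-d_1)$) silently assumes that the numerically found list of solutions is exhaustive, which you have not established. Your primary plan (symbolic simplification of the minor on columns $1,2,4$ using the constraints) could in principle close this, but it is strictly harder than necessary; replacing your minor by the upper-right block turns your proposal into a complete, unconditional proof identical to the paper's.
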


\begin{proof}
	We will show the equivalent statement that $C_d^v - \gamma I$ has rank $3$. Since $\gamma$ is an eigenvalue of $C_d^v$, the rank is at most $3$. Moreover, since $\gamma = (c+1)/2$, we have for small $\epsilon$,
	$$
	C_d^v - \gamma I = \epsilon\begin{pmatrix}
	-\frac{c_1}{2}+ \iu b_1 & -1 & -1/2 & -1/3\\
	-1 & \frac{c_1}{2} + \iu d_1 & - 1 & -1/2\\
	-1/2 & -1 & \frac{c_1}{2} - \iu d_1 &-1 \\
	-1/3 & -1/2 & -1 & -\frac{c_1}{2} - \iu b_1
	\end{pmatrix} +O(\epsilon^2).
	$$
	The determinant of the $3\times 3$ upper right block is given by
	$$\det \begin{pmatrix}-1 & -1/2 & -1/3\\
	\frac{c_1}{2} + \iu d_1 & - 1 & -1/2\\
	-1 & \frac{c_1}{2} - \iu d_1 &-1
	\end{pmatrix} = -\frac{1}{12}\left((c_1+3)^2+4d_1^2+2\right),$$
	and is negative for any $c_1$ and $d_1$. Therefore, for small $\epsilon$, the rank of $C_d^v - \gamma I$ is at least $3$.
\end{proof}

\section{Exceptional points of arbitrary order} \label{sec:higher}
Here, we study exceptional points in larger systems of resonators. We will consider a $\P\T$-symmetric array with either even or odd number of resonators. In the case of an odd number, analogously to the third order exceptional point, we assume that the centre resonator has no gain or loss. 

To be specific, we consider an array of $N$ resonators with material parameters given by $v_i^2 \delta_i := \delta a(a_i + \iu{b_i})$ for $i=1,...,N.$, for some $a,a_i,b_i\in \R$. We choose $a$ such that $a_1 = 1$. In the case of an even number of resonators, $N = 2n, n \in \N$, we assume that $a_i = a_{2n+1-i}$ and $b_i = -b_{2n+1-i}$, in other words that
$$v_1^2 \delta_1 = \delta a(a_1 + \iu{b_1}), \ \dots, \ v_n^2 \delta_n = \delta a(a_n + \iu{b_n}), \quad v_{n+1}^2 \delta_{n+1} = \delta a(a_n - \iu{b_n}), \ \dots,\ v_{2n}^2 \delta_{2n} = \delta a(a_1 - \iu{b_1}).$$
In the case of an odd number of resonators, $N = 2n + 1, n \in \N$, we assume that $a_i = a_{2n+2-i}$,  $b_i = -b_{2n+2-i}$ and $b_{n+1} = 0$, in other words that
\begin{align*}v_1^2 \delta_1 &= \delta a(a_1 + \iu{b_1}), \ \dots, \ v_n^2 \delta_n = \delta a(a_n + \iu{b_n}), \quad  v_{n+1}^2 \delta_{n+1} = \delta a a_{n+1}, \\ v_{n+2}^2 \delta_{n+2} &= \delta a(a_n - \iu{b_n}), \ \dots,\ v_{2n+1}^2 \delta_{2n+1} = \delta a(a_1 - \iu{b_1}).\end{align*}

In this setting, the dilute capacitance matrix $C^v_d = (C^v_{d,i,j})$, as defined in \Cref{sec:dilute}, is the matrix with entries specified by
$$
C^v_{d,i,j} = \begin{cases}\ds a_i + \iu{b_i}, \quad & i = j, \\[0.3em] \ds -(a_i + \iu{b_i}) \frac{\epsilon}{|i-j|}, & i\neq j.\end{cases}
$$
Again, to have an exceptional point of order $N$ we require that
\begin{align}
\det (C^v_d - x I) &= (\gamma-x)^N, \label{eq:pol} \\
\dim \ker (C_d^v - \gamma I) &= 1, \label{eq:dim}
\end{align}
for some $\gamma$. For general $N$,  the equation \eqref{eq:pol} is a system of $N$ polynomial equations of order $N$, in terms of the $N$ unknown parameters $\gamma, b_1, a_2,b_2,...,a_n,b_n$ and, if $N$ is odd, $a_{n+1}$. For $N$ larger than 4, it is not possible to explicitly derive the solutions, and we will numerically study this system of equations in \Cref{sec:num}. Nevertheless, under the assumption that \eqref{eq:pol} holds, we can derive asymptotic formulas for the unknown parameters analogously to \eqref{eq:b1c1d1.1}--\eqref{eq:b1c1d1.3}.

We begin by observing the following simple equation for $\gamma$:
\begin{equation} \label{eq:gamma}
\gamma = \frac{1}{N}\sum_{i=1}^N a_i.
\end{equation}
As $\epsilon \rightarrow 0$, we also have the following result.
\begin{lemma} \label{lem:exp}
	Assume that there is a solution $a_i, b_i,$ for $i = 1,...,N$,	to \eqref{eq:pol} which is continuous as $\epsilon \rightarrow 0$. Then, we have
	$$a_i = 1 + O(\epsilon), \qquad b_i = O(\epsilon),$$
	and, consequently, $\gamma = 1 + O(\epsilon)$.
\end{lemma}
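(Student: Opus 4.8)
The plan is to extract the leading-order behaviour directly from the matching conditions \eqref{eq:pol}, exploiting the fact that at $\epsilon = 0$ the matrix $C_d^v$ becomes diagonal. First I would observe that when $\epsilon = 0$ the off-diagonal entries of $C_d^v$ vanish, so $C_d^v|_{\epsilon=0} = \mathrm{diag}(a_1+\iu b_1, \dots, a_N+\iu b_N)$ and its characteristic polynomial factors as $\prod_{i=1}^N (x - (a_i+\iu b_i))$. For this to equal $(\gamma - x)^N \cdot (-1)^N$ at $\epsilon = 0$, every diagonal entry must coincide, forcing $a_i + \iu b_i = \gamma_0$ for all $i$, where $\gamma_0$ denotes the value of $\gamma$ at $\epsilon = 0$. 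Since the $a_i, b_i$ are real, this immediately gives $a_i = \gamma_0$ and $b_i = 0$ for every $i$. Combined with the normalization $a_1 = 1$, this yields $\gamma_0 = 1$ and hence $a_i = 1$, $b_i = 0$ at $\epsilon = 0$.

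The second step is to upgrade this pointwise statement at $\epsilon = 0$ to the claimed $O(\epsilon)$ estimates. Here I would invoke the hypothesis that the solution branch $a_i(\epsilon), b_i(\epsilon)$ is continuous as $\epsilon \to 0$: continuity at $\epsilon = 0$ together with the values just computed gives $a_i(\epsilon) \to 1$ and $b_i(\epsilon) \to 0$, so $a_i = 1 + o(1)$ and $b_i = o(1)$. To sharpen $o(1)$ to $O(\epsilon)$, I would note that the entries of $C_d^v$ are polynomial (indeed affine) in $\epsilon$, so the coefficients of its characteristic polynomial are polynomials in $\epsilon$ and in the $a_i, b_i$; the matching equations \eqref{eq:pol} then define a smooth (real-analytic) system in $(\epsilon, a_2, b_1, b_2, \dots)$. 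The solution at $\epsilon = 0$ being isolated and the relevant Jacobian being nonsingular would let me apply the implicit function theorem to conclude that the branch is differentiable in $\epsilon$, so that the first-order Taylor expansion $a_i = 1 + a_i'(0)\epsilon + O(\epsilon^2)$ and $b_i = b_i'(0)\epsilon + O(\epsilon^2)$ holds, which is exactly $a_i = 1 + O(\epsilon)$ and $b_i = O(\epsilon)$. Finally, substituting into \eqref{eq:gamma} gives $\gamma = \tfrac{1}{N}\sum_i a_i = 1 + O(\epsilon)$.

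The main obstacle I anticipate is the upgrade from $o(1)$ to $O(\epsilon)$ rather than the identification of the limit. The cleanest route avoids invoking the implicit function theorem (whose nonsingularity hypothesis would require a separate verification, and in the higher-order cases one expects degeneracies) and instead argues directly: since the lemma only \emph{assumes} continuity of the branch, the honest claim of rate $O(\epsilon)$ must come from the structure of the matching equations themselves. I would therefore prefer to write each matched coefficient condition in the form $(\text{polynomial in } a_i, b_i) = (\text{explicit polynomial in } \gamma)$ and substitute $a_i = 1 + \alpha_i$, $b_i = \beta_i$ with $\alpha_i, \beta_i = o(1)$; the $\epsilon^0$-terms cancel by the computation at $\epsilon = 0$, leaving relations whose lowest-order surviving terms pair the $\alpha_i, \beta_i$ against explicit powers of $\epsilon$ coming from the off-diagonal entries. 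Tracking which power of $\epsilon$ first appears on the right-hand side of each equation then forces $\alpha_i, \beta_i = O(\epsilon)$; this is precisely the mechanism already seen concretely in the $N=3$ and $N=4$ cases via \eqref{eq:b_asymp}--\eqref{eq:c_asymp} and the ansatz preceding \eqref{eq:b1c1d1.1}, and the general argument is the uniform version of that bookkeeping.
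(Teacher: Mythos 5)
Your proposal follows essentially the same route as the paper's proof: at $\epsilon=0$ the matrix $C_d^v$ is diagonal, so its characteristic polynomial factors as a product of linear factors whose roots must all coincide with $\gamma_0$, and the $O(\epsilon)$ rate is then extracted from the structure of the $\epsilon$-dependence of \eqref{eq:pol}. There is, however, one genuine (if small) logical slip in your first step: realness of the $a_i,b_i$ alone does \emph{not} give $b_{i,0}=0$. From $a_{i,0}+\iu b_{i,0}=\gamma_0$ for all $i$ you may only conclude that all $a_{i,0}$ are equal and all $b_{i,0}$ are equal; if $\gamma_0$ were complex, this would be perfectly consistent with every $b_{i,0}$ equal to the nonzero imaginary part of $\gamma_0$. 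What is needed is the realness of $\gamma_0$, which is exactly what the paper gets from \eqref{eq:gamma} (equivalently, from the $\P\T$-symmetry $b_i=-b_{N+1-i}$: quantities that are all equal and sum to zero must vanish). Since you invoke \eqref{eq:gamma} at the end of your argument anyway, the patch is one line, but as written the deduction is a non sequitur.

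On your second step, two observations. Your caution about the implicit function theorem is well founded and you were right to discard it: the limiting system is degenerate at $(a_i,b_i)=(1,0)$ — several distinct solution branches emanate from this single point (for $N=3$ the limiting cubic \eqref{eq_three} has a double root at $c=1$; for $N=4$ there are eight branches), so the relevant Jacobian cannot be invertible. Your fallback bookkeeping argument ("track which power of $\epsilon$ first survives") is then at the same level of formality as the paper's own one-line justification, namely that \eqref{eq:pol} has no terms of order $O(\epsilon)$, i.e.\ that the $\epsilon$-dependence of the matched coefficients enters only at order $\epsilon^2$. Strictly speaking, both versions tacitly use that the limiting equations vanish only to second order in the relevant directions: an equation such as $\alpha^3=\epsilon^2$ also has no $O(\epsilon)$ term yet produces a branch of size $\epsilon^{2/3}$. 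So this step is a shared informality rather than a defect of your proposal relative to the paper, and you correctly identified it as the delicate point.
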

\begin{proof}
	As $\epsilon\rightarrow 0$, denote the limiting values of $a_i$ and $b_i$ by $a_{i,0}$ and $b_{i,0}$, respectively. Since \eqref{eq:pol} has no terms of order $O(\epsilon)$, we find that
	\begin{align*}
	a_i = a_{i,0} + O(\epsilon), \qquad	b_i = b_{i,0} + O(\epsilon).
	\end{align*}
	We then have 
	\begin{align*}
	\det (C^v_d - x I) &= \prod_{i=1}^N (a_{i,0} + \iu b_{i,0} - x) + O(\epsilon) \\ 
	&=(\gamma_0 - x)^N + O(\epsilon),
	\end{align*}
	where, from \eqref{eq:gamma},
	$$ \gamma_0 = \frac{1}{N}\sum_{i=1}^N a_{i,0}.$$
	Since $\gamma_0 \in \R$, it follows that $b_{i,0} = 0$ for all $i=1,..,N$, and since $a_{1,0} = 1$ it follows that $a_{i,0} = 1$ for all $i=1,...,N$.
\end{proof}
Assume that \eqref{eq:pol} holds for some $\gamma$. Then, for small $\epsilon>0$,  we have by \Cref{lem:exp} that
$$a_i = 1 + a_{i,1}\epsilon + o(\epsilon), \qquad b_i = b_{i,1}\epsilon + o(\epsilon), \qquad \gamma = 1 + \gamma_1\epsilon + o(\epsilon),$$
for some $a_{i,1}, b_{i,1}$ and $\gamma_1$ independent of $\epsilon$. In this case, it holds that
\begin{equation}\label{eq:CmgI}
C_d^v = I + \epsilon C_{d,1}^v + o(\epsilon), \qquad C_{d,1}^v = \left(\begin{smallmatrix}
a_{1,1} + \iu b_{1,1} & -1 & -1/2 & \cdots & -1/(N-1)	\\
-1 & a_{2,1} + \iu b_{2,1} & - 1 & \cdots & -1/(N-2)	\\
-1/2 & -1 & a_{3,1} + \iu b_{3,1} & \cdots & -1/(N-3) \\
\svdots & \svdots & \svdots & \sddots & \svdots \\
-1/(N-1) & -1/(N-2) & -1/(N-3) & \cdots & a_{1,1} - \iu b_{1,1}	
\end{smallmatrix}\right).
\end{equation}
Then
$$C_d^v - \gamma I = \epsilon \left(C_{d,1}^v - \gamma_1I\right) + o(\epsilon).$$
Therefore, for $\gamma$ to be an $N$\textsuperscript{th} order exceptional point of $C_d^v$, we must have that $\gamma_1$ is an $N$\textsuperscript{th} order exceptional point of $C_{d,1}^v$. We can then obtain a system of polynomial equations describing the exceptional point (analogous to \eqref{eq:b1c1d1.1}--\eqref{eq:b1c1d1.3} but for general $N$) by expanding the characteristic polynomial of $C_{d,1}^v$. Moreover, assuming that \eqref{eq:pol} holds for some $\gamma$, a simple way to prove that \eqref{eq:dim} holds is to check that $C_{d,1}^v$ has a one-dimensional kernel.

At an exceptional point $\gamma_1$ of $C_{d,1}^v$, it follows from \Cref{thm:res_dilute} and \eqref{eq:CmgI} that the full system exhibits an asymptotic exceptional point with frequency given by
$$\omega^* = \sqrt{\frac{4\pi a \delta}{|D_1|}(1+\epsilon \gamma_1)} + O(\delta) + \delta^{1/2}o(\epsilon).$$

\section{Numerical computations} \label{sec:num} 
In this section, we perform numerical simulations to illustrate properties and applications of high-order exceptional points. In \Cref{sec:high-num} we numerically demonstrate exceptional points of arbitrary order. In \Cref{sec:sensing_position} we return to the initial question of achieving enhanced sensing using high-order exceptional points, and study the details of how strongly small particle perturbations are enhanced for different particle positions.

\subsection{High-order exceptional points} \label{sec:high-num}
To find the exceptional points of $C_{d,1}^v$, the equation for the characteristic polynomial,
$$\det (C^v_{d,1} - x I) = (\gamma_1-x)^N,$$
was solved numerically (in terms of the unknown parameters $a_{i,1}, a_{i,2}..., b_{i,1}, b_{i,2},... $ and $\gamma_1$), and the solutions satisfying $\dim \ker (C^v_{d,1} - \gamma_1 I) = 1$ were selected. Throughout this section, the computations were performed using spherical resonators with unit radius, $\delta = 1/5000, \epsilon = 0.1$ and $a = 1$. 

\Cref{fig:diluteN} shows, to leading order and for selected $N$, the resonant frequencies as the gain and loss increases from $0$ and crosses the exceptional points. The gain/loss parameters are $b_{i,1} = \tau b_{i,1}^*$, for $0 \leq \tau \leq 2$, where $b_{i,1}^*$ corresponds to the gain/loss at the exceptional point. \Cref{fig:diluteN} shows examples of exceptional points of various orders up to $N=14$. These examples all follow the same pattern, whereby the gain/loss grows linearly away from the centre (previously reported by \cite{zhang2020high}). We expect similar behaviour for even larger $N$, which demonstrates the possibility to create exceptional points of arbitrary order.

\Cref{fig:fullN} shows the resonant frequencies of the full differential system, without making any asymptotic approximations. Here, the resonant frequencies were computed using the multipole method (see \cite[Appendix A]{ammari2020topological} for details). This demonstrates the approximate nature of the exceptional points: due to the radiation condition and the loss of energy to the far field, the frequencies have non-zero imaginary parts even at $\tau=0$. As $\delta \rightarrow 0$, this imaginary part vanishes and the system has an exact exceptional point.
\begin{figure}
	\begin{center}
		\begin{subfigure}[b]{0.45\linewidth}
			\includegraphics[height=5.0cm]{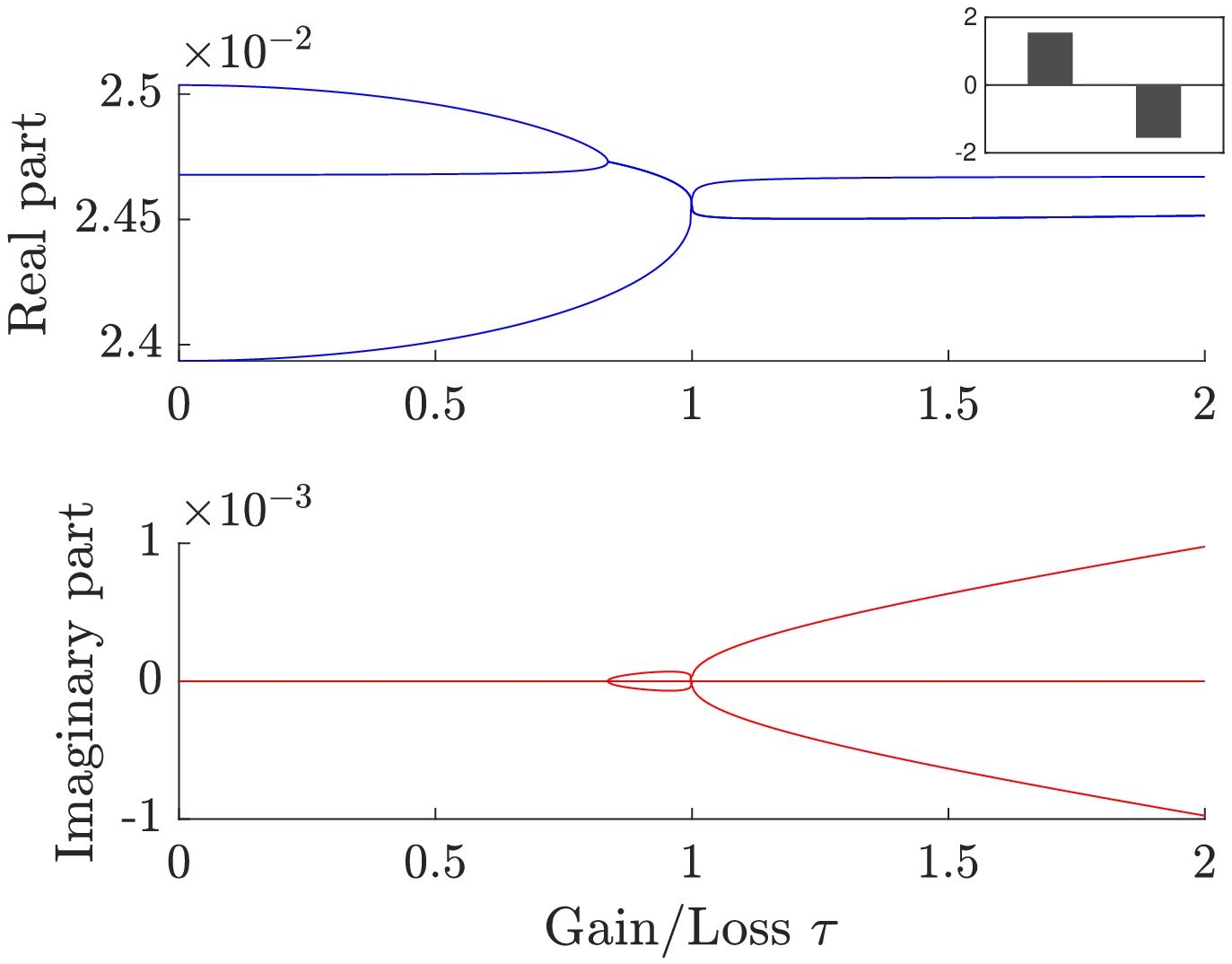}
			\caption{$N=3$} \label{fig:N3}
		\end{subfigure}			
		\hspace{10pt}
		\begin{subfigure}[b]{0.45\linewidth}
			\includegraphics[height=5.0cm]{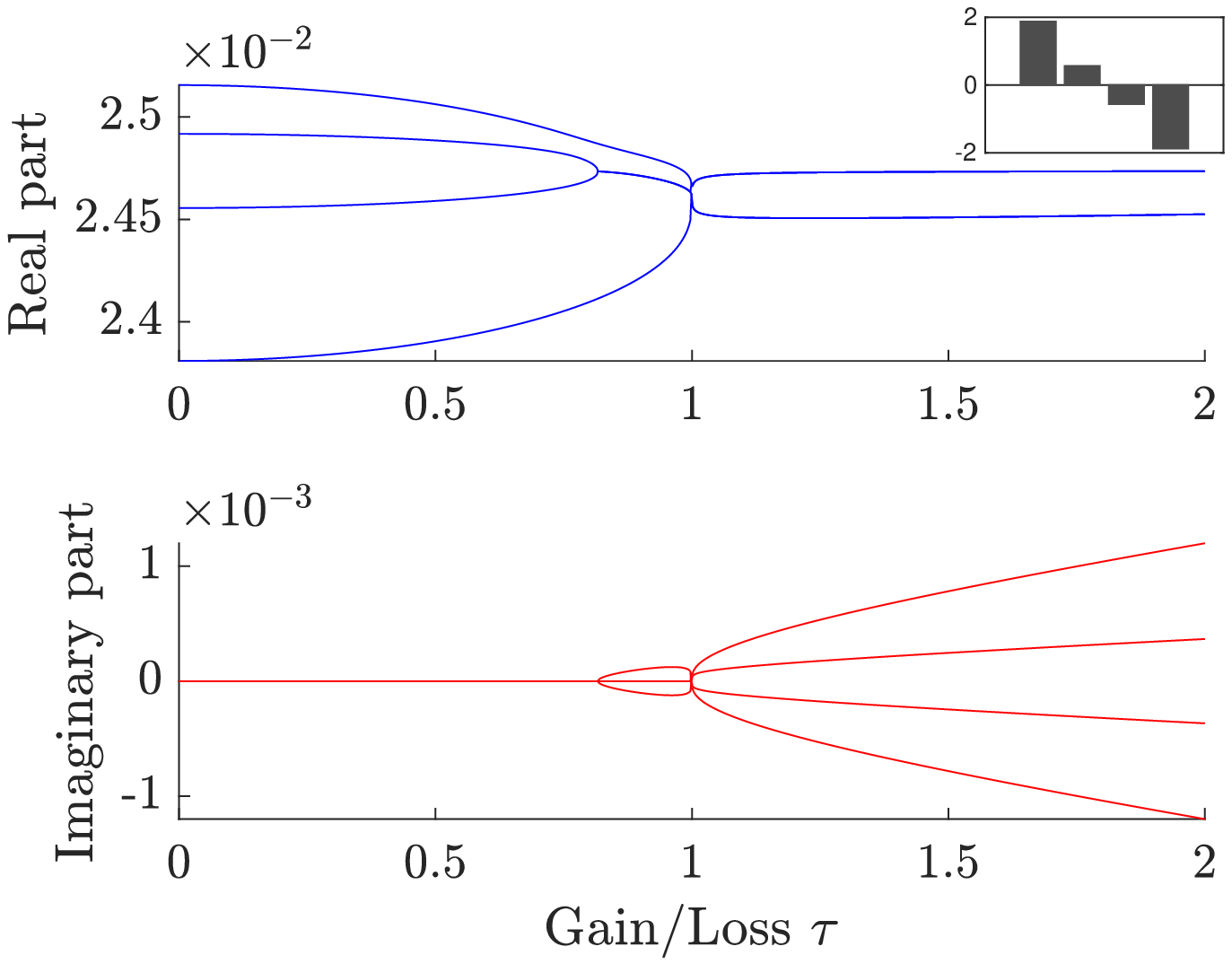}
			\caption{$N=4$.} \label{fig:N4}
		\end{subfigure}	
		\vspace{20pt}
		\begin{subfigure}[b]{0.45\linewidth}
			\includegraphics[height=5.0cm]{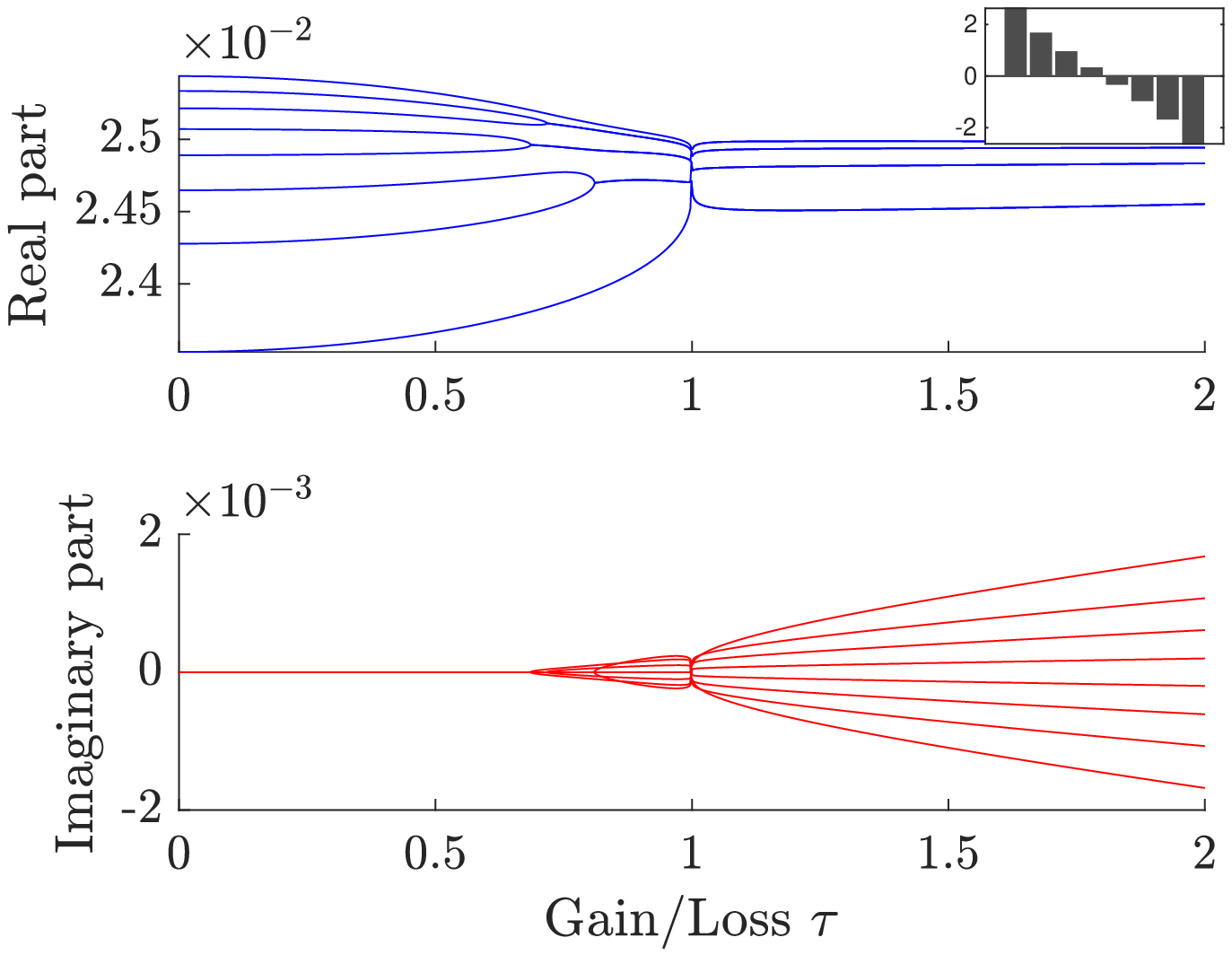}
			\caption{$N=8$.} \label{fig:N5}
		\end{subfigure}			
		\hspace{10pt}
		\begin{subfigure}[b]{0.45\linewidth}
			\includegraphics[height=5.0cm]{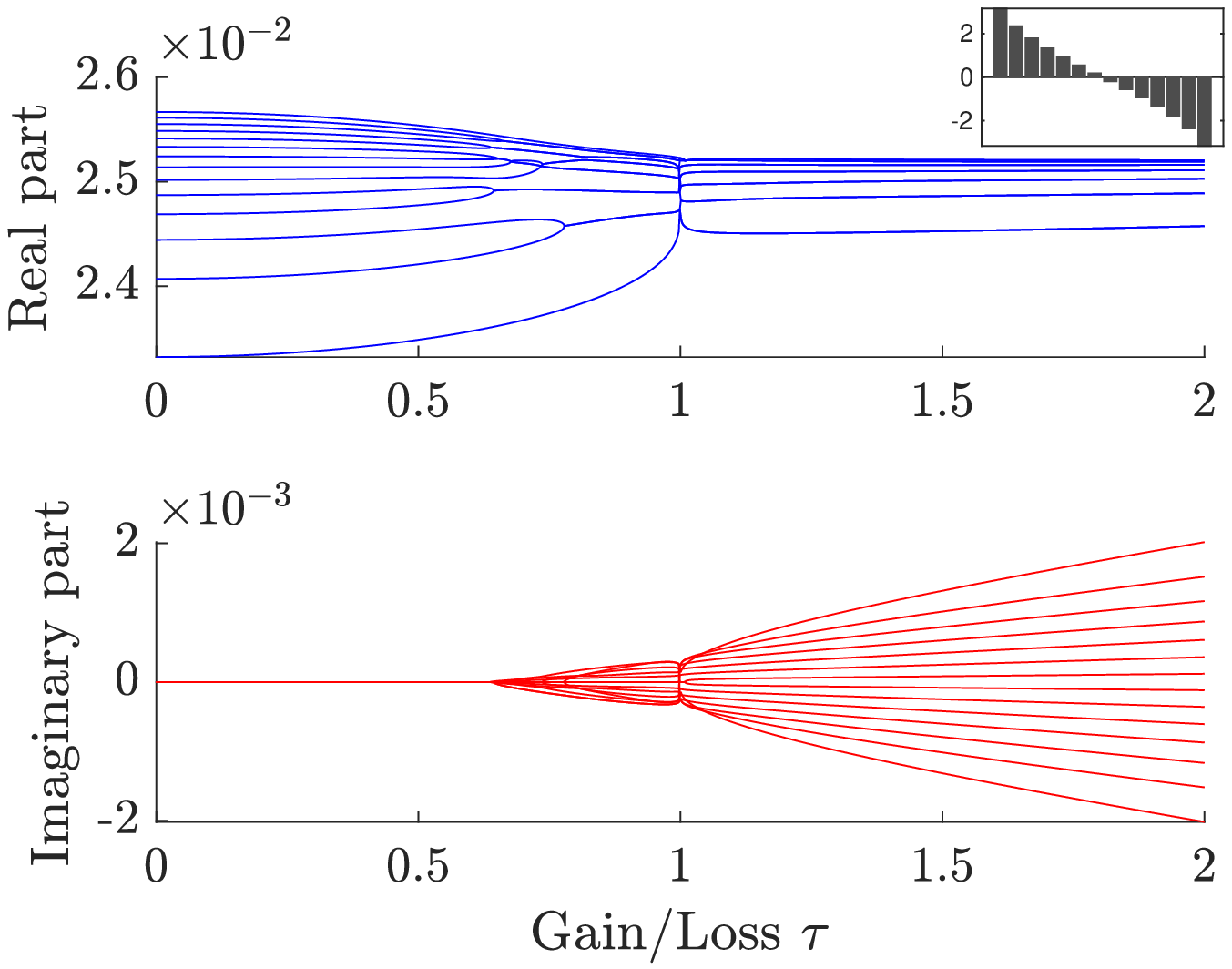}
			\caption{$N=14$.} \label{fig:N14}
		\end{subfigure}	
		\caption{Evolution of the resonant frequencies, to leading order, as gain and loss is introduced. Here, the imaginary parts $b_i$, as defined in \Cref{sec:higher}, are rescaled by $\tau \in [0,2]$, where $\tau = 1$ corresponds to an $N$\textsuperscript{th} order exceptional point. The inset plots show the relative distribution of the imaginary parts of the material parameters on each resonator.}
		\label{fig:diluteN}		
	\end{center}
\end{figure}
\begin{figure}
	\begin{center}
		\begin{subfigure}[b]{0.45\linewidth}
			\includegraphics[height=5.0cm]{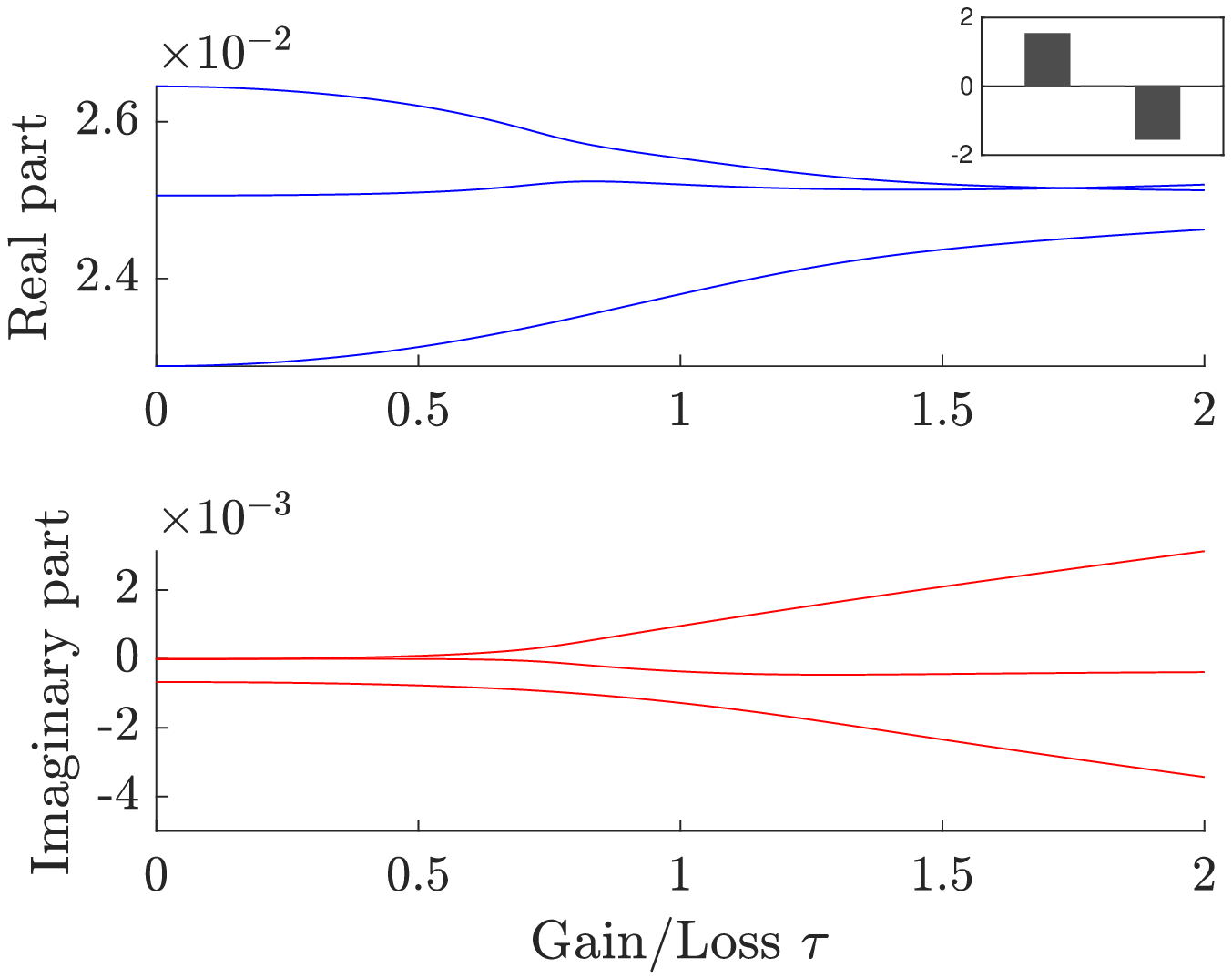}
			\caption{$N=3$} \label{fig:N3full}
		\end{subfigure}			
		\hspace{10pt}
		\begin{subfigure}[b]{0.45\linewidth}
			\includegraphics[height=5.0cm]{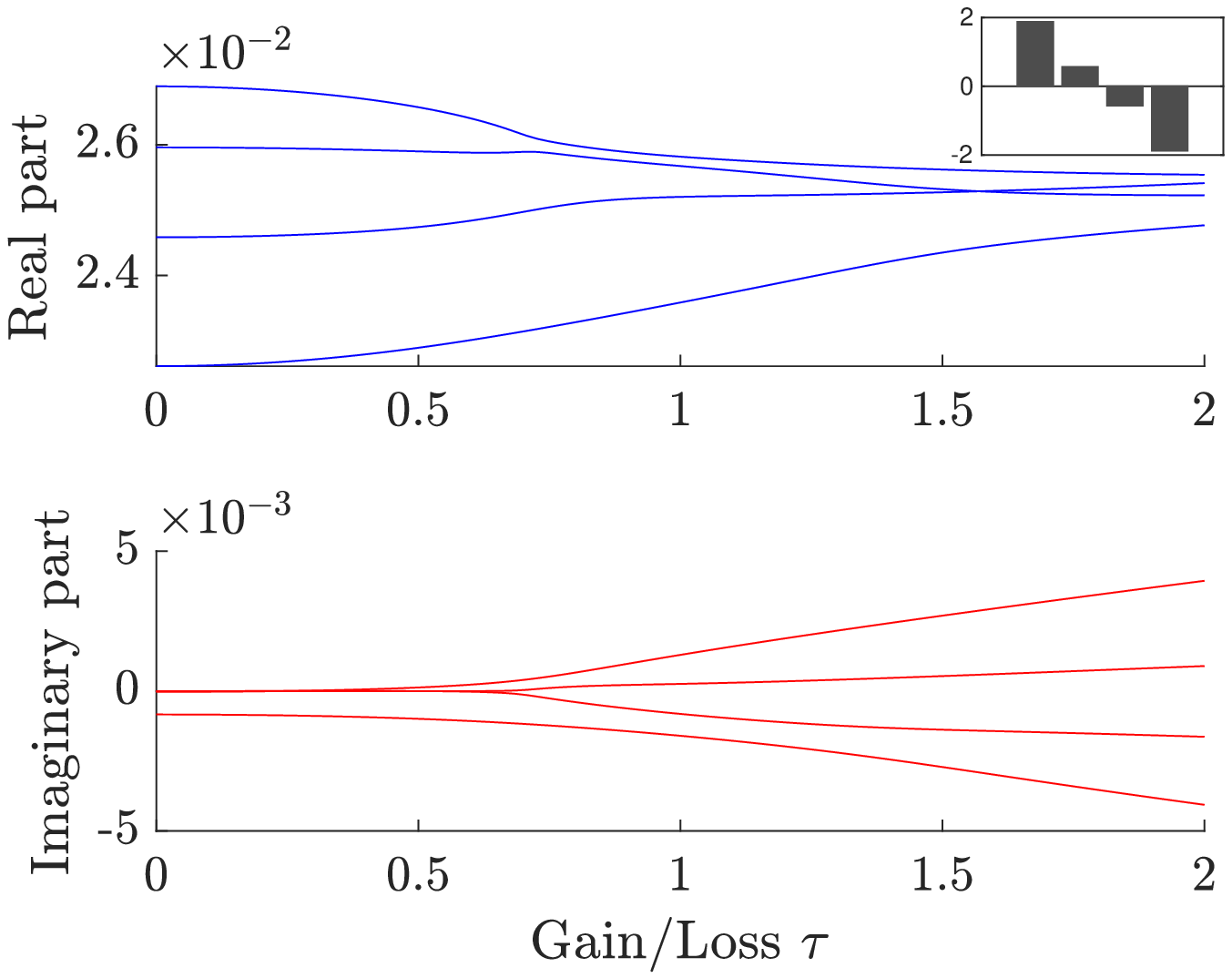}
			\caption{$N=4$.} \label{fig:N4full}
		\end{subfigure}
		\caption{Evolution of the resonant frequencies as gain and loss is introduced, computed for the full differential system without asymptotic approximation. The imaginary parts $b_i$, as defined in \Cref{sec:higher}, are rescaled by $\tau \in [0,2]$, where $\tau = 1$ corresponds to an $N$\textsuperscript{th} order asymptotic exceptional point.}
		\label{fig:fullN}		
	\end{center}
\end{figure}

The exceptional points demonstrated in \Cref{fig:diluteN} exhibit linearly growing gain/loss towards the edges of the resonator structure, which corresponds to one of the four solutions (\Cref{fig:sol4a}) from the case of four resonators. The other solutions from \Cref{fig:sols4} also have higher-order analogues. Another family of solutions, this time with alternating gain/loss (corresponding to \Cref{fig:sol4d}), is shown in \Cref{fig:Ndistr}. We emphasize that as $N$ increases, the number of different gain/loss distributions producing exceptional points vastly increases. While the realizations of these exceptional points involve matching a large number of parameters, the large number of different solutions suggests the possibility of reducing the dimensionality of the parameter space.

\begin{figure}
	\begin{center}
		\begin{subfigure}[b]{0.3\linewidth}
			\includegraphics[width=0.9\linewidth]{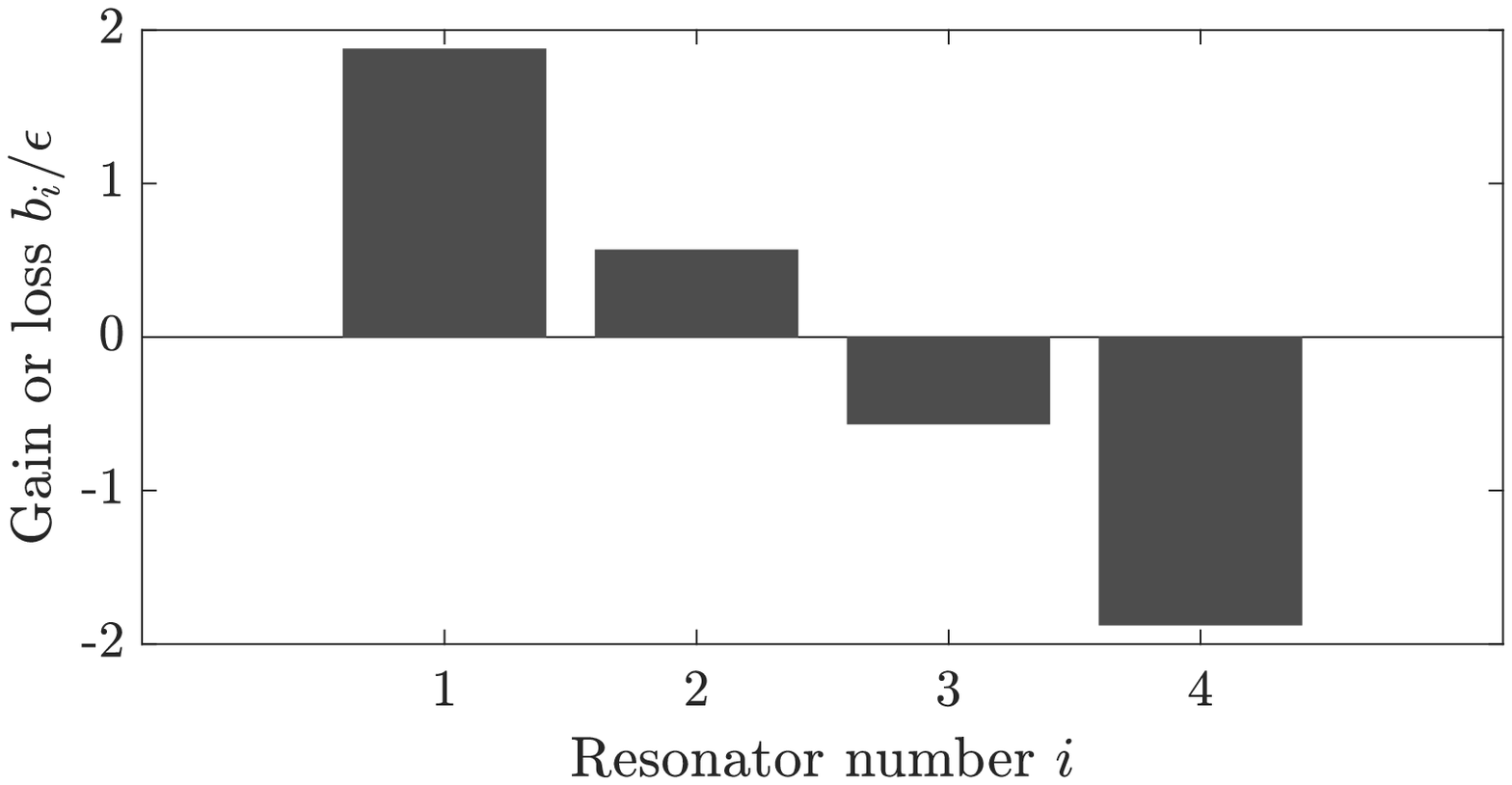}
			\caption{$N=4$} \label{fig:N4distr2}
		\end{subfigure}			
		\hspace{10pt}
		\begin{subfigure}[b]{0.3\linewidth}
			\includegraphics[width=0.9\linewidth]{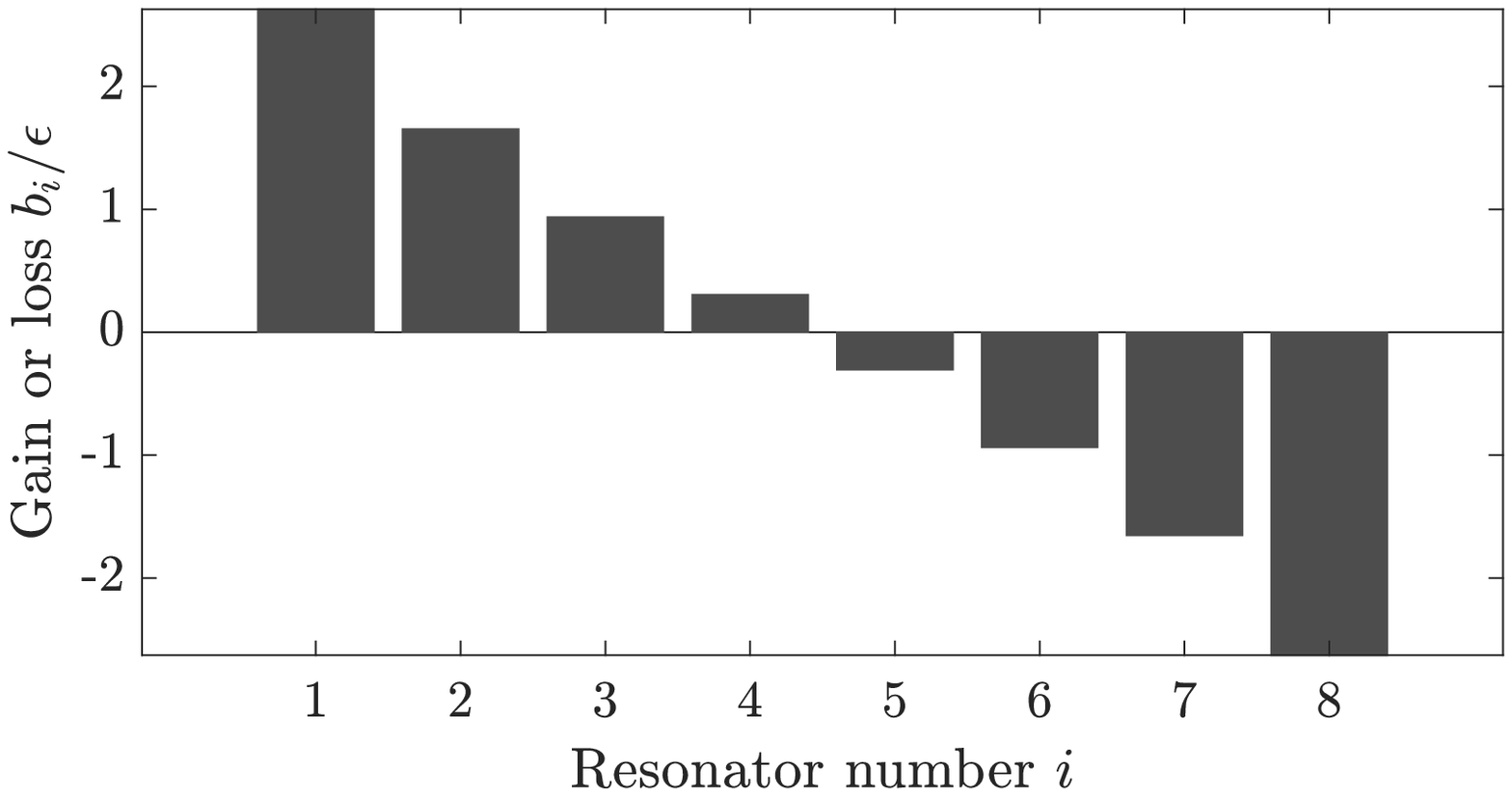}
			\caption{$N=8$.} \label{fig:N8distr2}
		\end{subfigure}
		\begin{subfigure}[b]{0.3\linewidth}
			\includegraphics[width=0.9\linewidth]{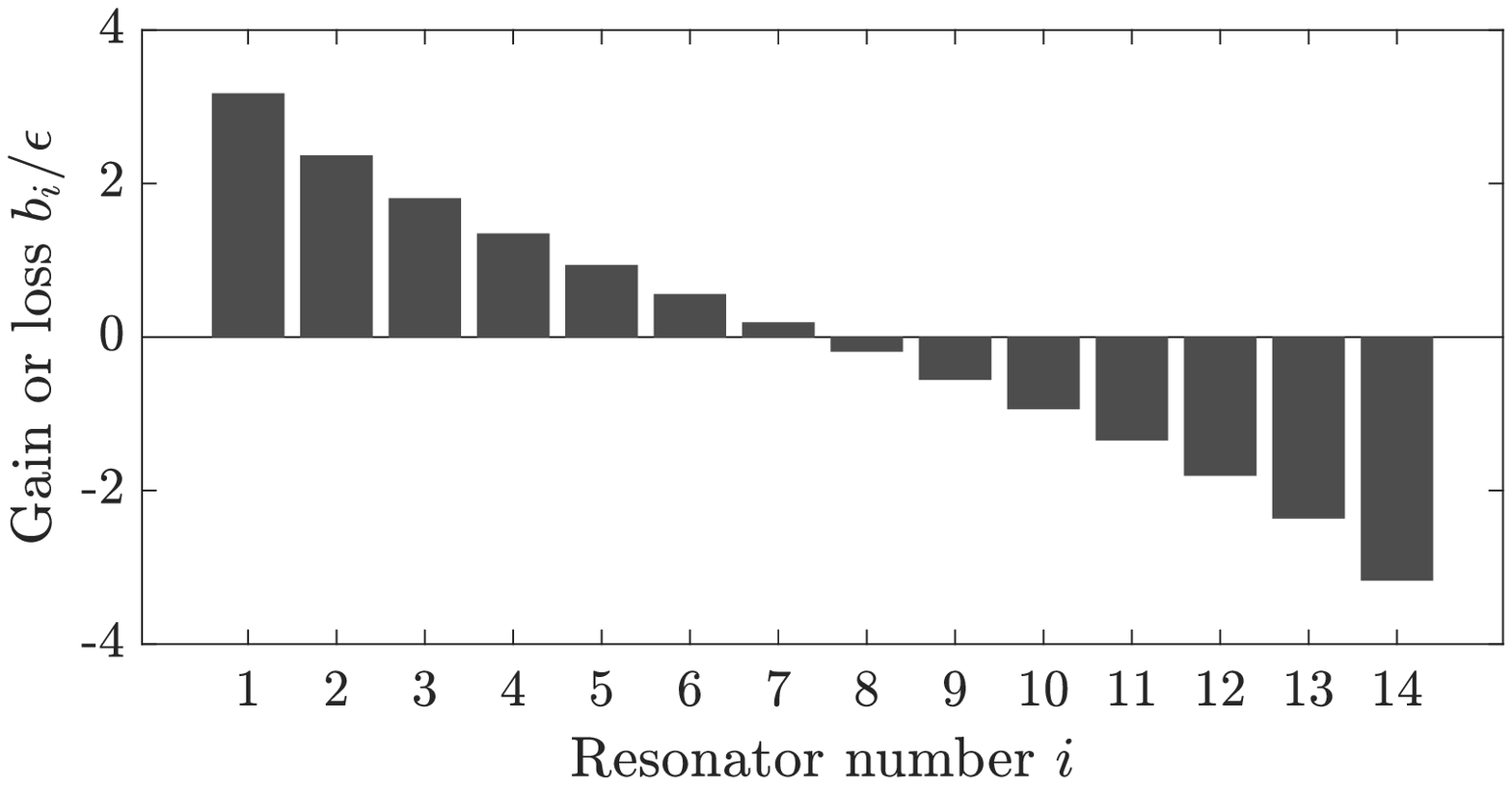}
			\caption{$N=14$} \label{fig:N14distr2}
		\end{subfigure}	
		\begin{subfigure}[b]{0.3\linewidth}
			\includegraphics[width=0.9\linewidth]{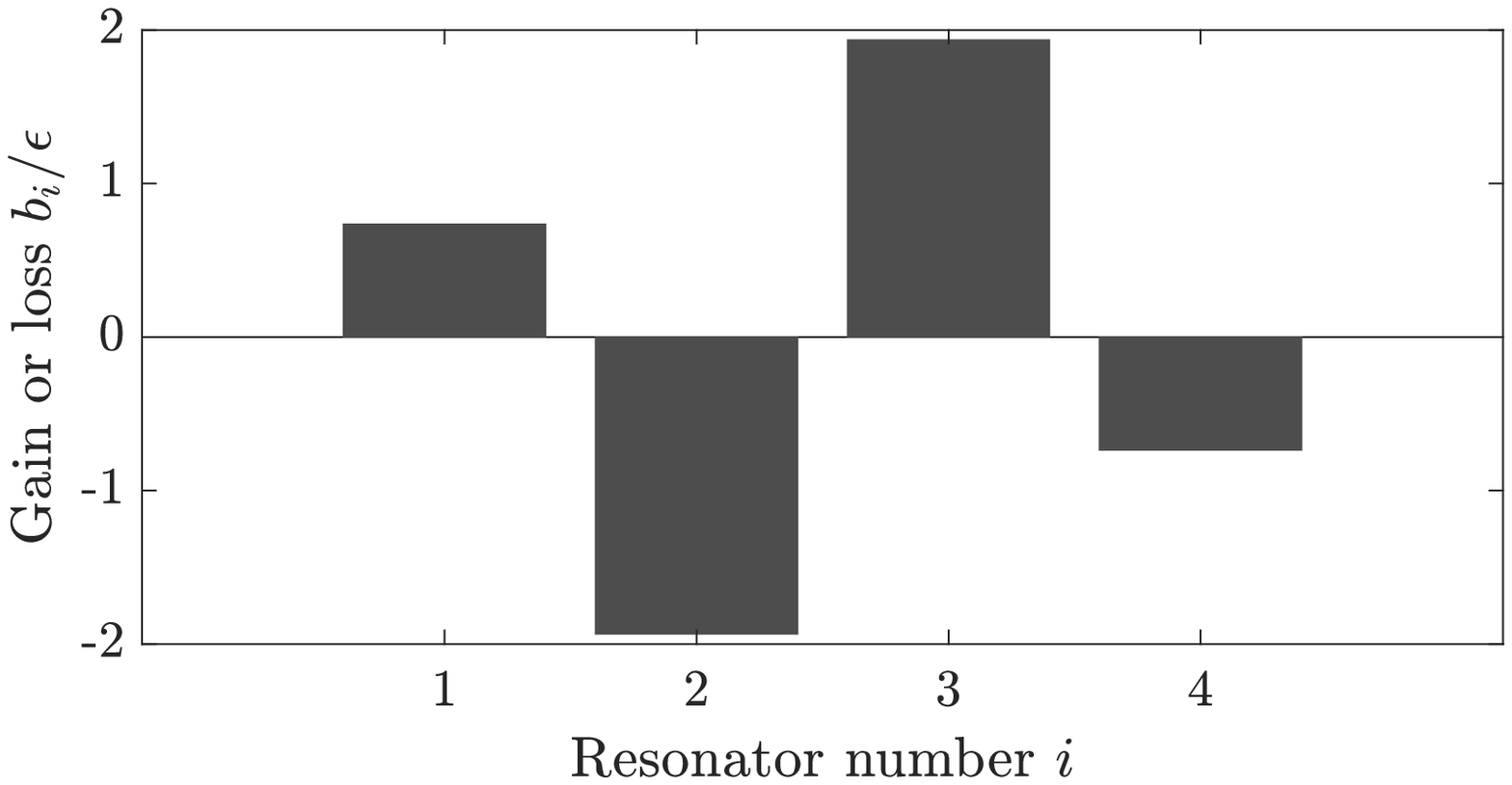}
			\caption{$N=4$} \label{fig:N4distr}
		\end{subfigure}			
		\hspace{10pt}
		\begin{subfigure}[b]{0.3\linewidth}
			\includegraphics[width=0.9\linewidth]{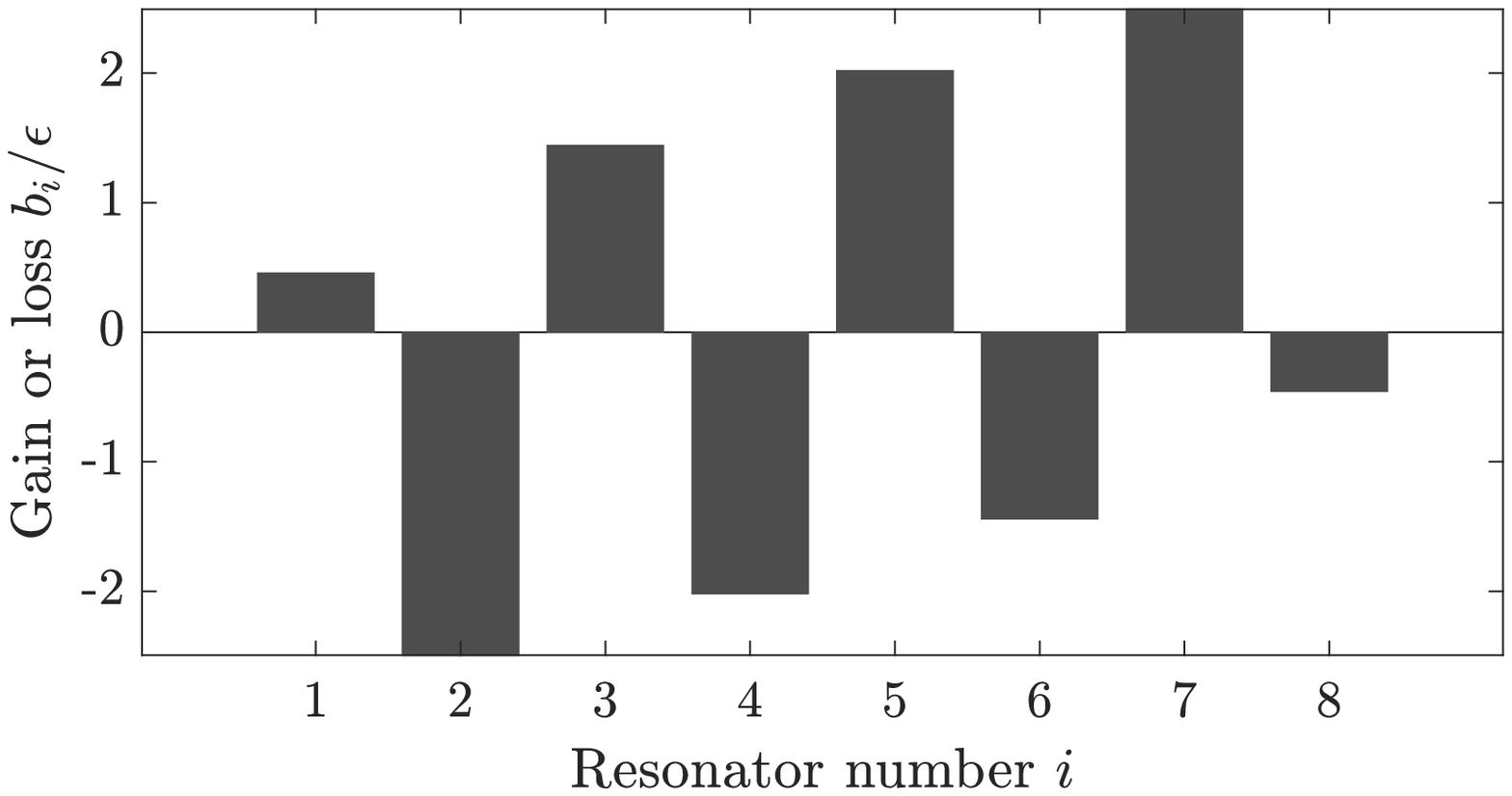}
			\caption{$N=8$.} \label{fig:N8distr}
		\end{subfigure}
		\begin{subfigure}[b]{0.3\linewidth}
			\includegraphics[width=0.9\linewidth]{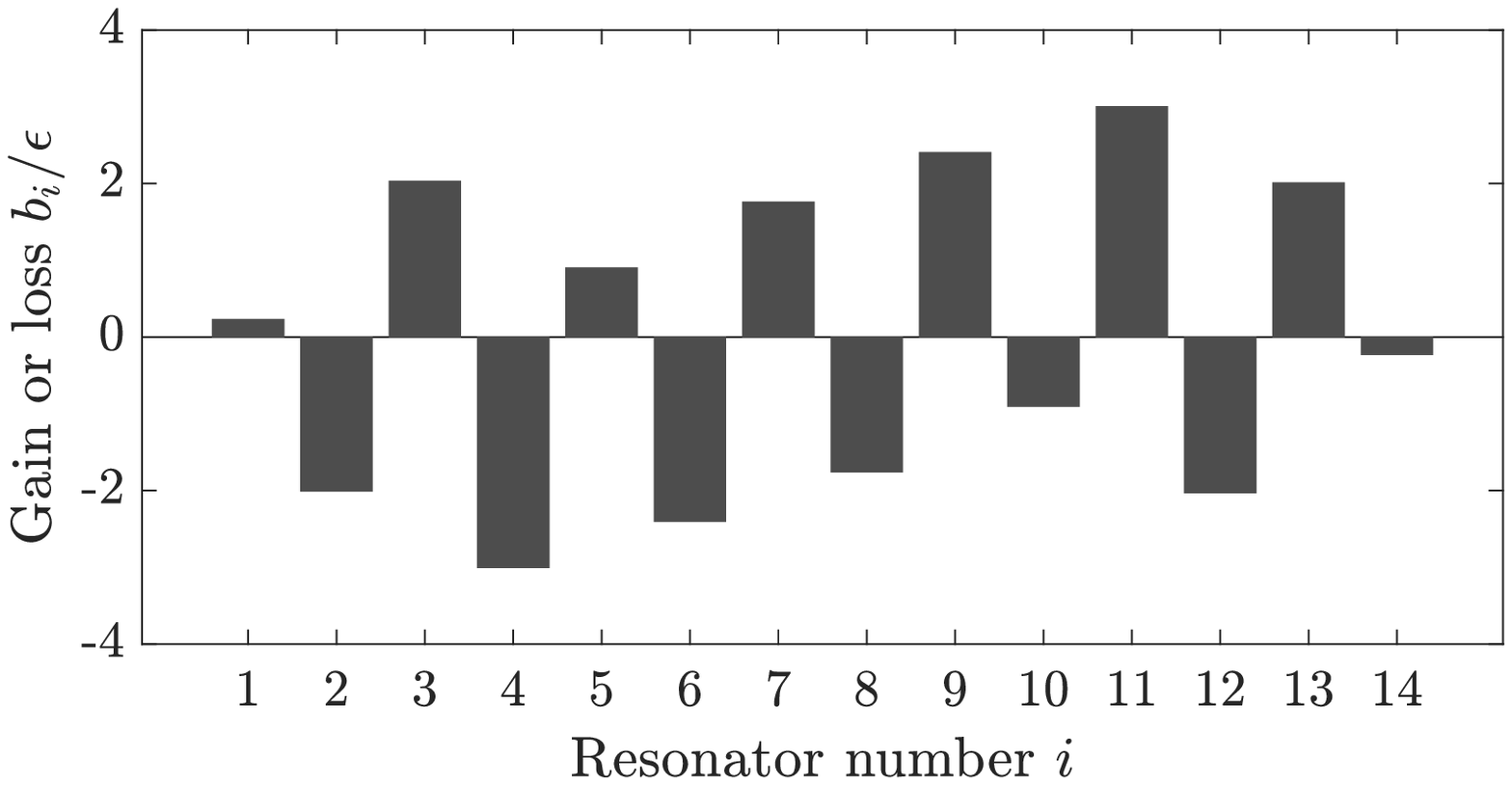}
			\caption{$N=14$} \label{fig:N14distr}
		\end{subfigure}	
		\caption{Exceptional points of different orders can be found with the same symmetries. Here, two examples of symmetric families of gain/loss distributions corresponding to exceptional points of different orders are shown. \Cref{fig:N4distr2,fig:N8distr2,fig:N14distr2} (which are the same distributions as in \Cref{fig:diluteN}) show roughly linearly growing distributions, while \Cref{fig:N4distr,fig:N8distr,fig:N14distr} show alternating distributions.
		}
		\label{fig:Ndistr}		
	\end{center}
\end{figure}

\begin{figure}
	\begin{center}
		\begin{subfigure}{0.6\linewidth}
			\includegraphics[height=5.0cm]{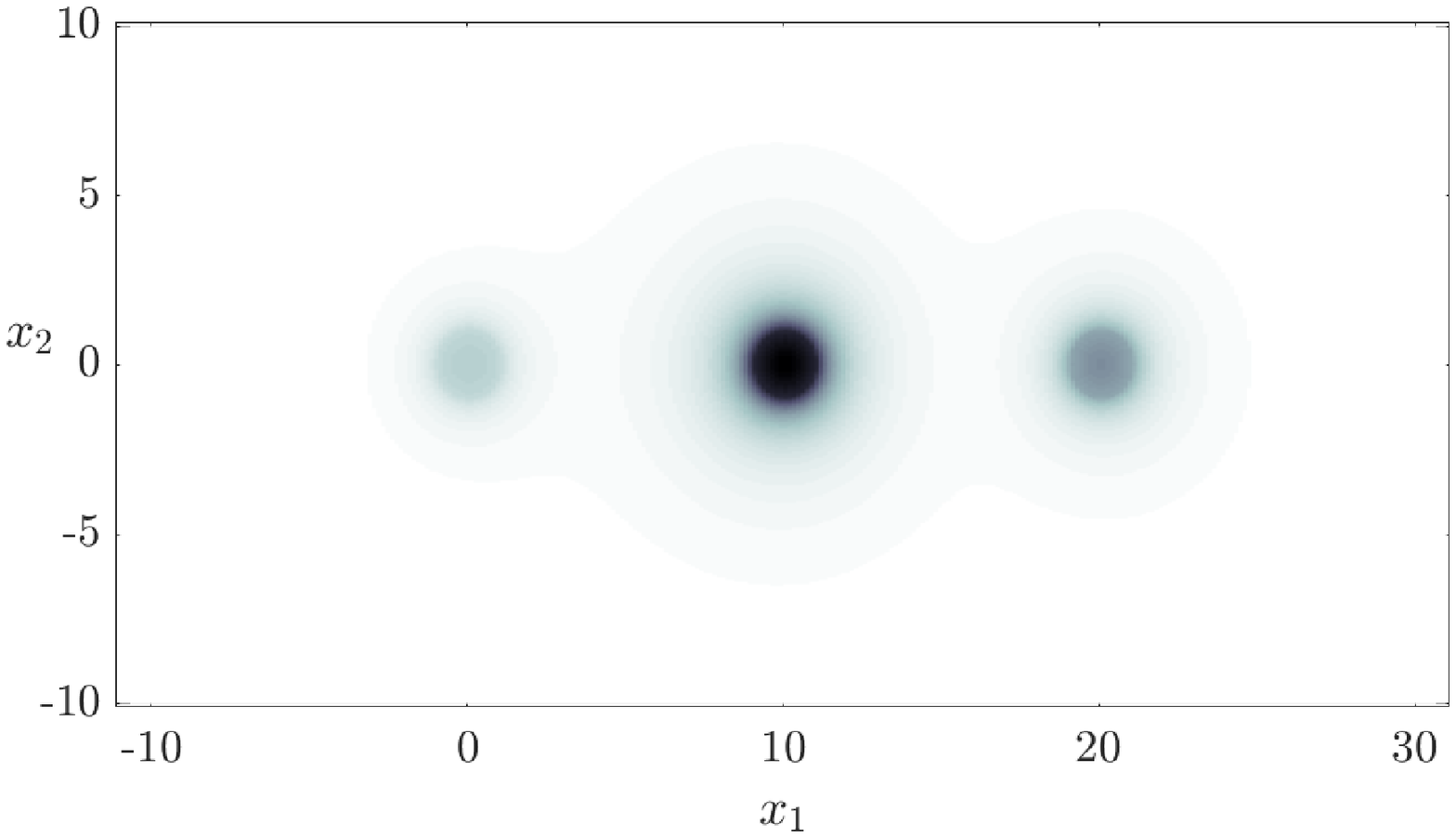}
			\caption{$|\varphi_N^2|$ for $N=3$ resonators.} \label{fig:N3pos}
	\end{subfigure}
	\vspace{15pt}
	
	\begin{subfigure}[b]{0.49\linewidth}
		\centering
		\includegraphics[width=\linewidth]{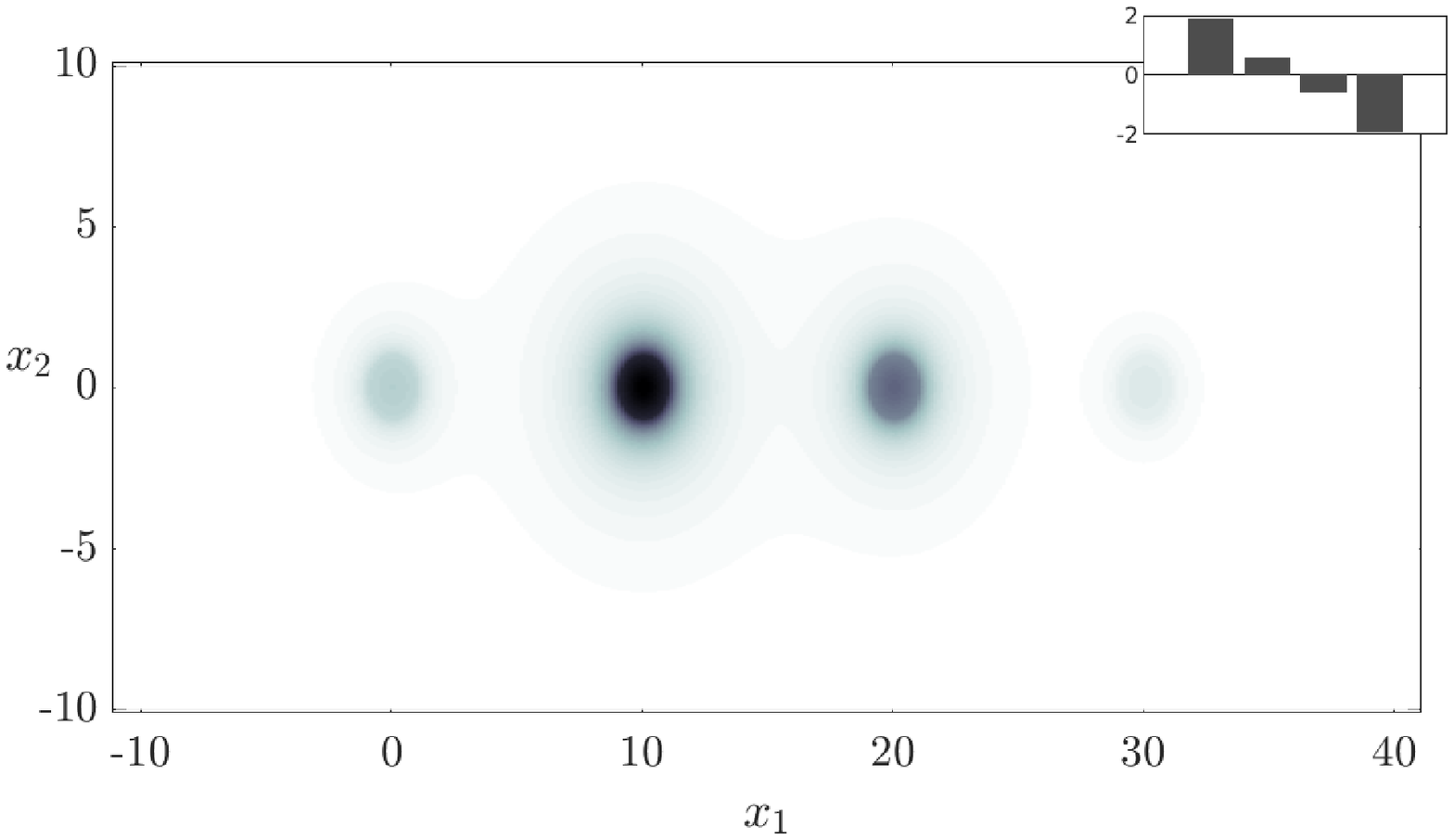}
		\captionsetup{type=figure}
	\end{subfigure}
	\begin{subfigure}[b]{0.49\linewidth}
		\centering
		\includegraphics[width=\linewidth]{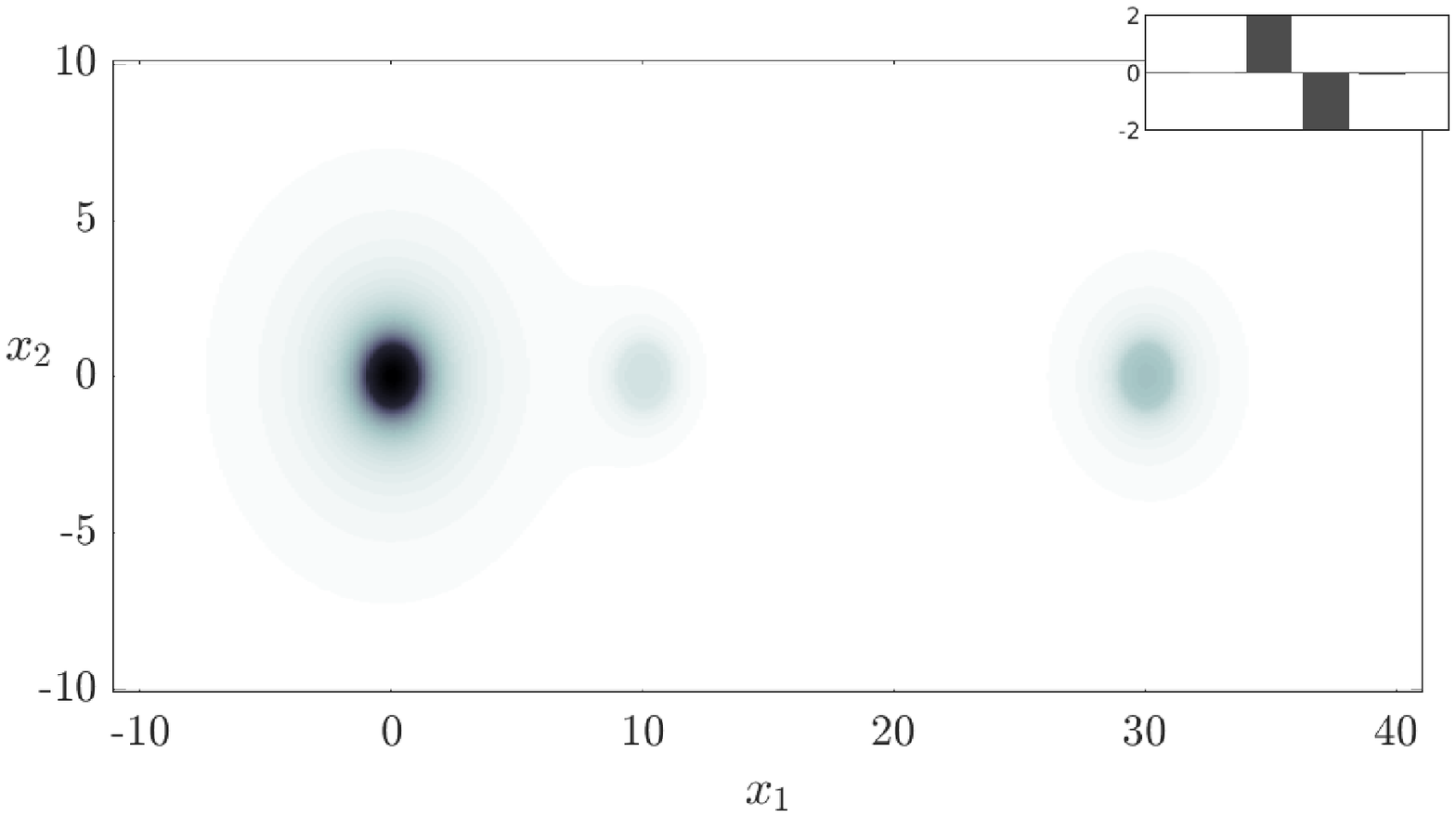}
		\captionsetup{type=figure}
	\end{subfigure}
	\vspace{5pt}
	
	\begin{subfigure}[b]{0.49\linewidth}
		\centering
		\includegraphics[width=\linewidth]{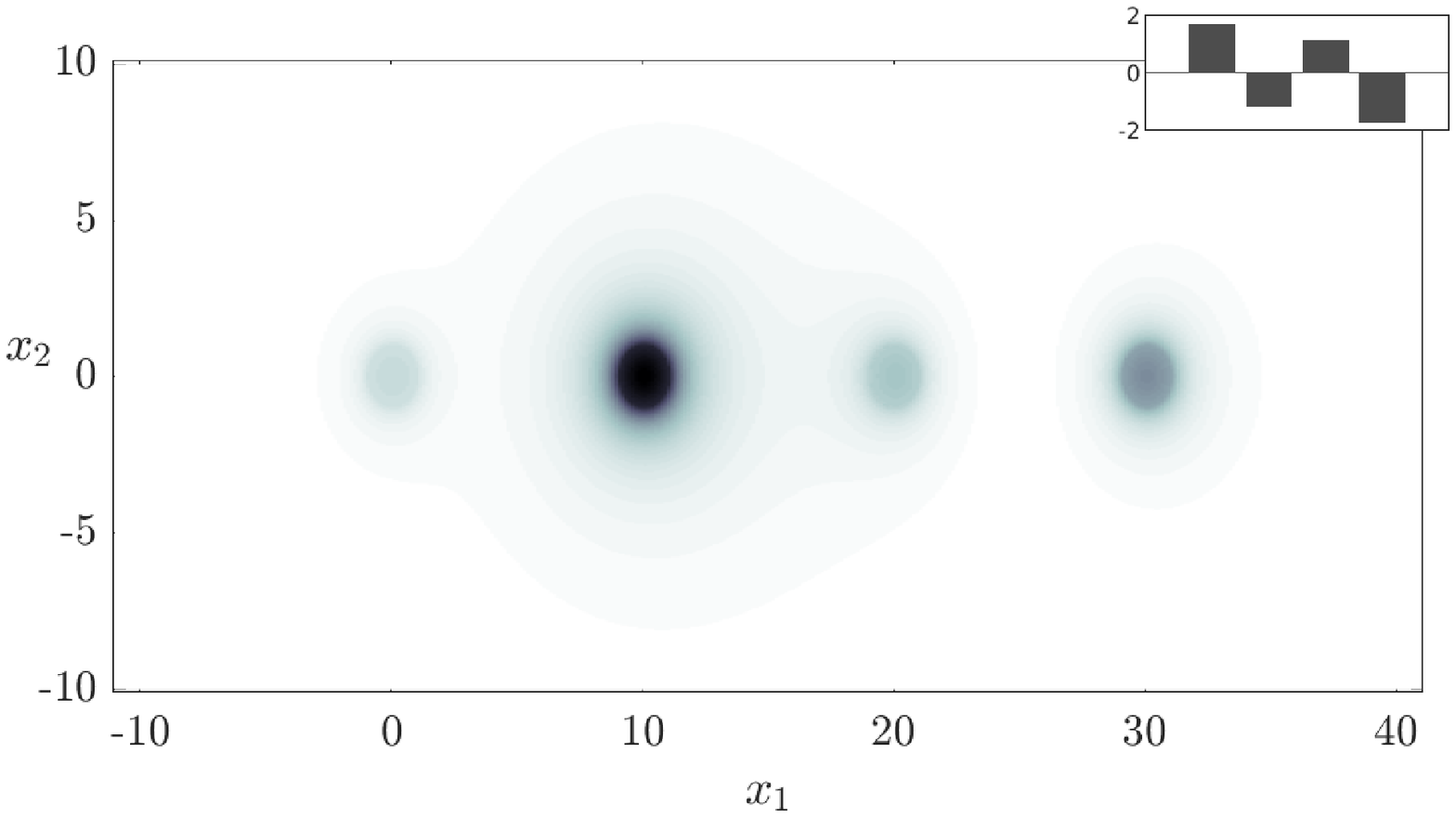}
		\captionsetup{type=figure}		
	\end{subfigure}
	\begin{subfigure}[b]{0.49\linewidth}
		\centering
		\includegraphics[width=\linewidth]{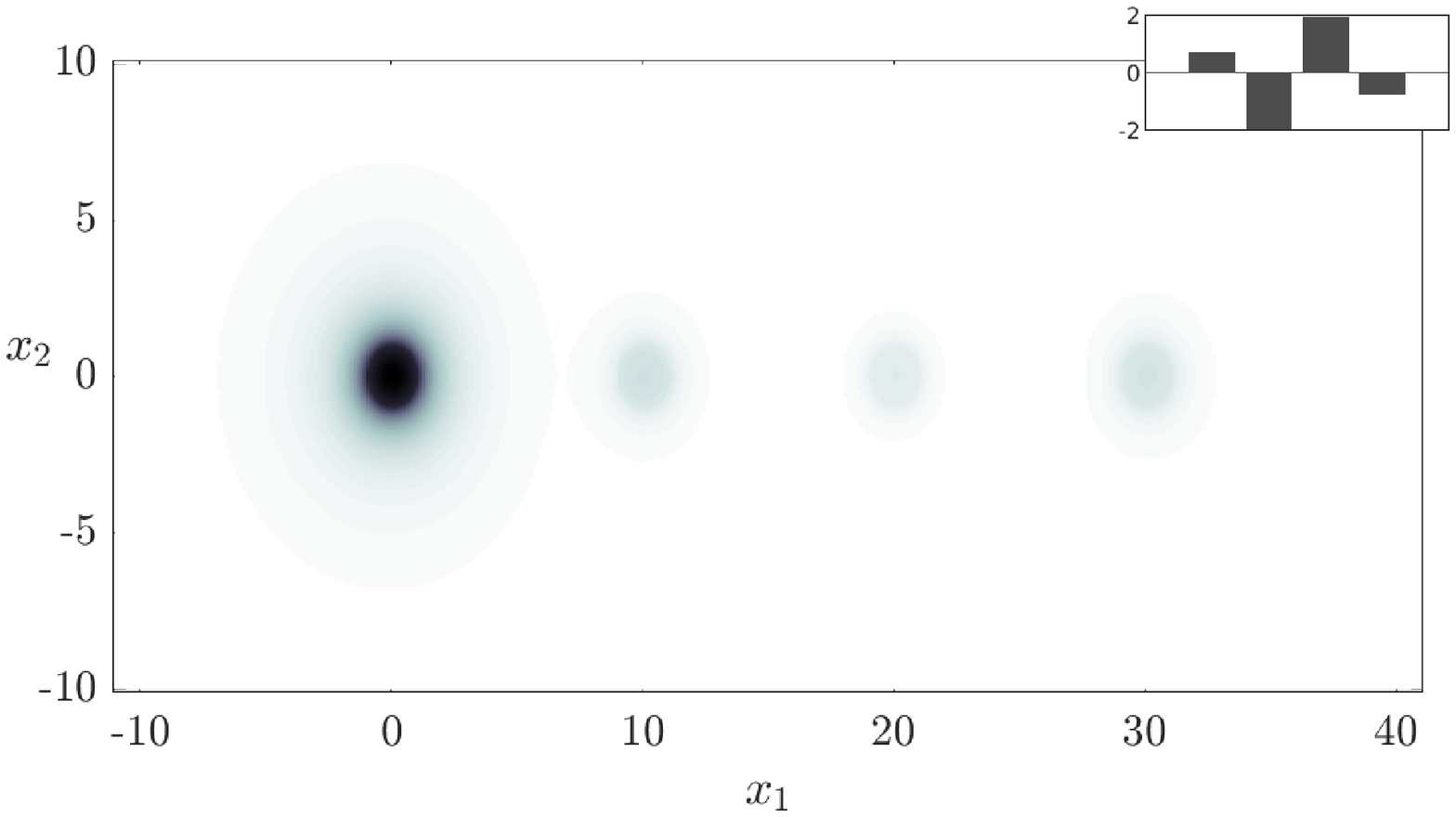}
		\captionsetup{type=figure}
	\end{subfigure}
\vspace{5pt}

\begin{subfigure}{0.65\linewidth}
	\centering
	\caption{$|\varphi_N^2|$ for each of the four asymptotic exceptional points in the case of $N=4$ resonators. The distribution of the imaginary parts on each resonator is shown inset, corresponding to the values depicted in \Cref{fig:sols4}.} \label{fig:N4pos}
	\includegraphics[width=0.7\linewidth]{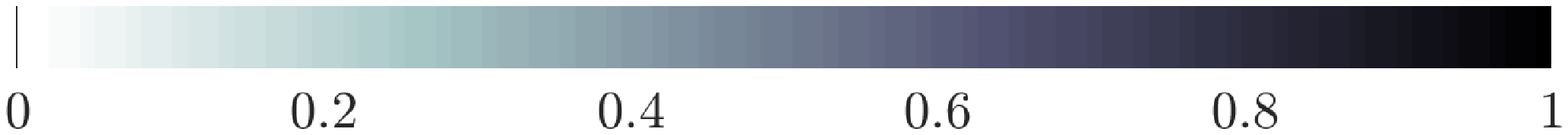}
\end{subfigure}
	\caption{In order to maximise the enhanced perturbation due to the introduction of a small particle, the position of the particle relative to the array of resonators is an important consideration. The analysis of \Cref{sec:sensing} implies that one should seek to place the particle at the point where $|\varphi_N^2|$ is maximized.}
	\label{fig:position}
	\end{center}
\end{figure}

\subsection{Enhanced sensing} \label{sec:sensing_position}

Recall that an array with an exceptional point of order $N$ has powerful applications in enhanced sensing since small perturbations typically lead to eigenfrequency shifts with a $1/N$ exponent. This was examined for the particular case of sensing the presence of a small particle in \Cref{sec:sensing}. It was shown that one of the eigenfrequencies would experience a shift proportional to $\eta_z^{1/N} |\Omega|^{1/N}$, where $|\Omega|$ is the volume of the small particle and $\eta_z$ depends on the particle's position. For small $|\Omega|$, the operator $T$ is close to the identity so $\eta_z$ is approximately proportional to $\varphi_N^2(z)$. We can thus explore the optimal position to place the small particle by plotting $|\varphi_N^2|$ and looking for its maximum. This is shown for each of the exceptional points in case of three and four resonators in \Cref{fig:position}.

The profiles presented in \Cref{fig:position} are simulations of the full differential system using the multipole method \cite[Appendix A]{ammari2020topological}. This system exhibits only \emph{asymptotic} exceptional points, which explains the unexpected symmetry. The asymptotic parameter values derived in \Cref{sec:third,sec:fourth-order} were used in these simulations.

As is typically the case with structures composed of many subwavelength resonators (see \emph{e.g.} \cite{ammari2019fully, ammari2020topological}), the maxima of the resonant modes occur on the resonators themselves. Thus, for optimal enhanced sensing of a small particle, the particle should be placed close to one of the resonators. The choice of which resonator is ideal varies depending on the precise configuration of the exceptional point in question, as demonstrated by \Cref{fig:N4pos}.


\section{Concluding remarks}
In this work, we have demonstrated the possibility of using subwavelength resonators for sensitivity enhancement. There are two key steps in the argument. First, we have shown that enhanced sensing occurs at exceptional points, and proven how the sensitivity is enhanced by increasing the order of the exceptional point. Then, we have demonstrated that high-order asymptotic exceptional points indeed occur in systems of subwavelength resonators. We have rigorously proven that a third-order asymptotic exceptional point exists, and have numerically demonstrated a plethora of configurations giving exceptional points of higher-orders.

\section*{Data availability}

The code used in this study is available at \href{https://github.com/davies-b/highEPs}{\texttt{https://github.com/davies-b/highEPs}}. 

\bibliographystyle{abbrv}
\bibliography{exceptional_higher}{}

\begin{thebibliography}{10}

\bibitem{abdrabou2018exceptional}
A.~Abdrabou and Y.~Y. Lu.
\newblock Exceptional points of resonant states on a periodic slab.
\newblock {\em Phys. Rev. A}, 97(6):063822, 2018.

\bibitem{abdrabou2019exceptional}
A.~Abdrabou and Y.~Y. Lu.
\newblock Exceptional points for resonant states on parallel circular
  dielectric cylinders.
\newblock {\em J Opt. Soc. Am. B}, 36(6):1659--1667, 2019.

\bibitem{abdrabou2019formation}
A.~Abdrabou and Y.~Y. Lu.
\newblock Formation of resonance state exceptional points in two-dimensional
  periodic structures.
\newblock In {\em 2019 IEEE International Conference on Computational
  Electromagnetics (ICCEM)}, pages 1--3. IEEE, 2019.

\bibitem{ammari2019fully}
H.~Ammari and B.~Davies.
\newblock A fully-coupled subwavelength resonance approach to filtering
  auditory signals.
\newblock {\em Proc. R. Soc. A}, 475(2228):20190049, 2019.

\bibitem{ammari2020topological}
H.~Ammari, B.~Davies, E.~O. Hiltunen, and S.~Yu.
\newblock Topologically protected edge modes in one-dimensional chains of
  subwavelength resonators.
\newblock {\em J. Math. Pures Appl.}, to appear (arXiv:1906.10688), 2020.

\bibitem{ammari2020exceptional}
H.~Ammari, B.~Davies, H.~Lee, E.~O. Hiltunen, and S.~Yu.
\newblock Exceptional points in parity--time-symmetric subwavelength
  metamaterials.
\newblock {\em arXiv preprint arXiv:2003.07796}, 2020.

\bibitem{ammari2020close}
H.~Ammari, B.~Davies, and S.~Yu.
\newblock Close-to-touching acoustic subwavelength resonators: eigenfrequency
  separation and gradient blow-up.
\newblock {\em Multiscale Model. Simul.}, 18(3):1299–1317, 2020.

\bibitem{ammari2018mathematical}
H.~Ammari, B.~Fitzpatrick, H.~Kang, M.~Ruiz, S.~Yu, and H.~Zhang.
\newblock {\em Mathematical and computational methods in photonics and
  phononics}, volume 235 of {\em Mathematical surveys and monographs}.
\newblock American Mathematical Society, Providence, 2018.

\bibitem{ammari2017double}
H.~Ammari, B.~Fitzpatrick, H.~Lee, S.~Yu, and H.~Zhang.
\newblock Double-negative acoustic metamaterials.
\newblock {\em Quart. Appl. Math.}, 77(4):767--791, 2019.

\bibitem{ammari2020edge}
H.~Ammari and E.~O. Hiltunen.
\newblock Edge modes in active systems of subwavelength resonators.
\newblock {\em arXiv preprint arXiv:2006.05719}, 2020.

\bibitem{ammari2009layer}
H.~Ammari, H.~Kang, and H.~Lee.
\newblock {\em Layer potential techniques in spectral analysis}, volume 153 of
  {\em Mathematical Surveys and Monographs}.
\newblock American Mathematical Society, Providence, 2009.

\bibitem{ammari2015super}
H.~Ammari and H.~Zhang.
\newblock Super-resolution in high-contrast media.
\newblock {\em Proc. R. Soc. A}, 471(2178):20140946, 2015.

\bibitem{chen2017exceptional}
W.~Chen, {\c{S}}.~K. {\"O}zdemir, G.~Zhao, J.~Wiersig, and L.~Yang.
\newblock Exceptional points enhance sensing in an optical microcavity.
\newblock {\em Nature}, 548(7666):192--196, 2017.

\bibitem{ding2016emergence}
K.~Ding, G.~Ma, M.~Xiao, Z.~Zhang, and C.~T. Chan.
\newblock Emergence, coalescence, and topological properties of multiple
  exceptional points and their experimental realization.
\newblock {\em Phys. Rev. X}, 6(2):021007, 2016.

\bibitem{ding2015coalescence}
K.~Ding, Z.~Q. Zhang, and C.~T. Chan.
\newblock Coalescence of exceptional points and phase diagrams for
  one-dimensional $\mathcal{P}\mathcal{T}$-symmetric photonic crystals.
\newblock {\em Phys. Rev. B}, 92:235310, Dec 2015.

\bibitem{heiss2012physics}
W.~Heiss.
\newblock The physics of exceptional points.
\newblock {\em J. Phys. A: Math. Theor.}, 45(44):444016, 2012.

\bibitem{hodaei2017enhanced}
H.~Hodaei, A.~U. Hassan, S.~Wittek, H.~Garcia-Gracia, R.~El-Ganainy, D.~N.
  Christodoulides, and M.~Khajavikhan.
\newblock Enhanced sensitivity at higher-order exceptional points.
\newblock {\em Nature}, 548(7666):187--191, 2017.

\bibitem{miri2019exceptional}
M.-A. Miri and A.~Al{\`u}.
\newblock Exceptional points in optics and photonics.
\newblock {\em Science}, 363(6422):eaar7709, 2019.

\bibitem{rechtsman2017applied}
M.~C. Rechtsman.
\newblock Applied physics: Optical sensing gets exceptional.
\newblock {\em Nature}, 548(7666):161--162, 2017.

\bibitem{vollmer2008whispering}
F.~Vollmer and S.~Arnold.
\newblock Whispering-gallery-mode biosensing: label-free detection down to
  single molecules.
\newblock {\em Nat. Methods}, 5(7):591, 2008.

\bibitem{vollmer2008single}
F.~Vollmer, S.~Arnold, and D.~Keng.
\newblock Single virus detection from the reactive shift of a
  whispering-gallery mode.
\newblock {\em Proc. Natl. Acad. Sci. U.S.A.}, 105(52):20701--20704, 2008.

\bibitem{wang2019arbitrary}
S.~Wang, B.~Hou, W.~Lu, Y.~Chen, Z.~Q. Zhang, and C.~T. Chan.
\newblock Arbitrary order exceptional point induced by photonic spin--orbit
  interaction in coupled resonators.
\newblock {\em Nat. Commun.}, 10(1):832, 2019.

\bibitem{wiersig2016sensors}
J.~Wiersig.
\newblock Sensors operating at exceptional points: general theory.
\newblock {\em Phys. Rev. A}, 93(3):033809, 2016.

\bibitem{yu2020higher}
T.~Yu, H.~Yang, L.~Song, P.~Yan, and Y.~Cao.
\newblock Higher-order exceptional points in ferromagnetic trilayers.
\newblock {\em Phys. Rev. B}, 101(14):144414, 2020.

\bibitem{zhang2020high}
S.~Zhang, X.~Zhang, L.~Jin, and Z.~Song.
\newblock High-order exceptional points in supersymmetric arrays.
\newblock {\em Phys. Rev. A}, 101(3):033820, 2020.

\end{thebibliography}

\appendix
\section{Second-order exceptional points} \label{sec:2res}
The approach used in this work can also be used to find an exceptional point in a $\mathcal{PT}$-symmetric pair of resonators. This structure was previously studied without the assumption of diluteness in \cite{ammari2020exceptional}. The two resonators have material parameters given by
$$v_1^2 \delta_1 := \delta a(1 + \iu{b}), \qquad v_2^2 \delta_2 := \delta a(1 - \iu{b}),$$
In this case we wish to find an exceptional point of the matrix 
\begin{equation}
C^v_d = \begin{pmatrix}
\ds 1+\iu b   &\ds -(1+\iu b) \epsilon 
\\
-(1-\iu b) \epsilon &\ds 1-\iu b 
\end{pmatrix}.
\end{equation}
This has characteristic polynomial given by
\begin{equation}
P(x)=x^2-2x+(1+b^2)(1-\epsilon^2),
\end{equation}
which we want to have the form $(x-\gamma)^2=x^2-2\gamma x+\gamma^2$. This can be achieved by choosing $\gamma=1$ and 
\begin{equation}
b=\frac{\epsilon}{\sqrt{1-\epsilon^2}}.
\end{equation}
We must also verify that $\dim \ker (C_d^v - I) = 1$. We have that
\begin{equation}
C_d^v - I = \begin{pmatrix} \iu \epsilon & -\epsilon \\ -\epsilon & -\iu\epsilon \end{pmatrix} +O(\epsilon^2),
\end{equation}
from which we can see that $\ker (C_d^v - I)$ is spanned by $(-\iu,1)^\mathrm{T}$, at leading order in $\epsilon$, and is one dimensional.

\end{document}